\newtheorem{thm}{Theorem}[section]
\newtheorem*{theorem*}{Theorem}
\newtheorem*{acknowledgement*}{Acknowledgement}
\newtheorem{lem}[thm]{Lemma}
\newtheorem{prop}[thm]{Proposition}
\theoremstyle{definition}
\theoremstyle{remark}
\newtheorem{rem}[thm]{Remark}
\theoremstyle{definition}
\numberwithin{equation}{section}
\newcommand{\set}[1]{\left\{#1\right\}}
\newcommand{\Real}{\mathbb R}
\newcommand{\dist}[0]{\mathrm{dist}}
\title{ Rigidity Properties of Colding-Minicozzi Entropies}
	\author{Jacob Bernstein}
\address{Department of Mathematics, Johns Hopkins University, 3400 N. Charles Street, Baltimore, MD 21218}
\email{bernstein@math.jhu.edu}
\thanks{The author was partially supported by the  DMS-2203132 and the Institute for Advanced Study with funding provided by the Charles Simonyi Endowment.}
\subjclass{53A07, 53A10}
\begin{document}
\dedicatory{Dedicated to Joel Spruck in honor of his retirement.} 
\begin{abstract}
We show certain rigidity for minimizers of generalized Colding-Minicozzi entropies.  The proofs are elementary and work even in situations where the generalized entropies are not monotone along mean curvature flow.
\end{abstract}
\maketitle

\section{Introduction}
We use an expansion of the volume of a submanifold in small geodesic balls as in \cite{karpVolumeSmallExtrinsic1989} to show some rigidity phenomena for natural generalizations of the Colding-Minicozzi entropy \cite{Coldinga, BernsteinBhattacharyaCartan}.   In particular, the arguments work even when the quantities are not monotone along mean curvature flow.  

In order to define the generalized entropies we begin by setting
\begin{equation}\label{KnkappaEqn}
	K_{n,\kappa}(t,r)= \left\{\begin{array}{cc} \kappa^nK_n(\kappa^2 t, \kappa r) & \kappa,t>0,  r\geq 0\\
		(4\pi t)^{-n/2} e^{-\frac{r^2}{4 t}} & \kappa=0, t>0, r\geq  0
	\end{array}\right.
\end{equation}
where $n\geq 1$,  $\kappa\geq 0$ and $K_n$ are explicit (if complicated) functions used in \cite{daviesHeatKernelBounds1988} to study the heat kernel on hyperbolic space. For instance, 
$$
K_3(t,r)= (4\pi t)^{-\frac{3}{2}} \frac{r}{\sinh(r)}e^{-t-\frac{r^2}{4t}}.$$
The other $K_n$ are determined recursively -- see \cite{daviesHeatKernelBounds1988} for details.  In general,  
$$
H_{n, \kappa}(t,x; t_0, x_0)=K_{n,\kappa}(t-t_0, \dist_{g}(x,x_0))
$$
is the heat kernel on $(M,g)$ with singularity at $x_0$ and time $t_0$ precisely when $(M,g)$ is a simply connected space form of constant curvature $-\kappa^2$.  Following \cite{Coldinga,BernsteinHypEntropy, BernsteinBhattacharyaCartan}, let
$$
\Phi_{n, \kappa}^{t_0,x_0}(t,x)=K_{n,\kappa}(t_0-t, \dist_{g}(x,x_0))
$$
and for $\Sigma\subset M$, an $n$-dimensional submanifold,  define \emph{the Colding-Minicozzi $\kappa$-entropy of $\Sigma$ in $(M,g)$} to be
$$
\lambda_g^\kappa[\Sigma]=\sup_{x_0\in M, \tau>0}\int_{\Sigma} \Phi_{n, \kappa}^{0, x_0} (-\tau, \cdot) dV=\sup_{x_0\in M, \tau>0}\int_{\Sigma} \Phi_{n, \kappa}^{\tau, x_0} (0, \cdot) dV.
$$
When $\kappa=0$ and $(M,g)=(\Real^{n+k}, g_{\Real})$ is Euclidean space, this is the usual Colding-Minicozzi entropy, $\lambda[\Sigma]$, of $\Sigma$ from \cite{Coldinga}. When $\kappa=1$ and $(M,g)=(\mathbb{H}^{n+k}, g_{\mathbb{H}})$ is hyperbolic space, it is the entropy in hyperbolic space, $\lambda_{\mathbb{H}}[\Sigma]$, from \cite{BernsteinHypEntropy}.

In \cite[Theorem 1]{BernsteinBhattacharyaCartan}, it is shown that if $(M,g)$ is an $(n+k)$-dimensional Cartan-Hadamard manifold with $\sec_{g}\leq -{\kappa}^2_0$ and $0\leq \kappa \leq \kappa_0$, then, for any mean curvature flow of closed $n$-dimensional submanifolds, $t\in [t_1,t_2]\mapsto \Sigma_{t}\subset M$, 
$$
\lambda_{g}^\kappa[\Sigma_{t_1}]\geq \lambda_{g}^\kappa[\Sigma_{t_2}].
$$
This generalizes and unifies the monotonicity properties of the entropy of \cite{Coldinga} and \cite{BernsteinHypEntropy}.  Monotonicity also holds for non-closed flow under appropriate hypotheses.

It follows readily from the definition that for any $n$-dimensional submanifold $\Sigma\subset M$, one has $\lambda_g^\kappa[\Sigma]\geq 1$ -- see \cite[Proposition 6.3]{BernsteinBhattacharyaCartan}.  We seek to understand what can be said about $\Sigma$ in the case of equality -- i.e., when $\Sigma$ minimizes a Colding-Minicozzi $\kappa$-entropy.  The monotonicity of entropy can be used to answer this and related problems -- see \cite{chenRigidityStabilitySubmanifolds2021, BernsteinWang1, JZhu}.  However, the question makes perfectly good sense in arbitrary Riemannian manifolds where monotonicity may not hold.  With this in mind, we establish some rigidity properties that hold without using monotonicity.

\begin{thm}\label{SurfaceRigidityThm}
	Let $(M,g)$ be a $(2+k)$-dimensional Riemannian manifold with $sec_g\leq -\kappa^2$.  If $\Sigma$ is a proper surface and $\lambda_g^\kappa[\Sigma]=1$, then $\Sigma$ is totally umbilic.  In particular, if $(M, g)$ is Euclidean space and $\kappa=0$, then $\Sigma$ is an affine two-plane.
\end{thm}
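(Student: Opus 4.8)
The plan is to localise the equality $\lambda_g^\kappa[\Sigma]=1$ at each point and extract umbilicity from the second-order term of a small geodesic-ball volume expansion. Fix $p\in\Sigma$; since $\Sigma$ is a surface, write
\[
I_p(\tau)=\int_\Sigma \Phi_{2,\kappa}^{\tau,p}(0,\cdot)\,dV=\int_\Sigma K_{2,\kappa}\bigl(\tau,\dist_g(\cdot,p)\bigr)\,dV .
\]
By the definition of the entropy and the hypothesis, $I_p(\tau)\le 1$ for all $\tau>0$; and since $\Sigma$ is a smooth proper surface, $I_p(\tau)\to 1$ as $\tau\to 0^+$ (the $\kappa$-Gaussian density of $\Sigma$ at a smooth point is $1$; properness is used only to discard the contribution of $\Sigma$ outside a fixed ball about $p$). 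Hence it suffices to produce an expansion $I_p(\tau)=1+a(p)\,\tau+o(\tau)$, to show $a(p)\ge 0$ for every $p$, and to show $a(p)=0$ forces $\Sigma$ to be umbilic at $p$: comparison with $I_p(\tau)\le 1$ then yields $a(p)=0$ at every $p\in\Sigma$.

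To get the expansion, integrate by parts, $I_p(\tau)=\int_0^\infty K_{2,\kappa}(\tau,\rho)\,V'(\rho)\,d\rho$ with $V(\rho)=|\Sigma\cap B^g_\rho(p)|$, and substitute two Taylor expansions: the Karp-type expansion $V(\rho)=\omega_2\rho^2\bigl(1+c_2(p)\rho^2+O(\rho^4)\bigr)$ (the $\rho^3$ term vanishes by a parity argument over geodesic spheres of $\Sigma$; in any case a nonzero $\rho^3$ term would contribute only at order $\tau^{3/2}$), and the on-diagonal heat-kernel expansion $K_{2,\kappa}(\tau,\rho)=(4\pi\tau)^{-1}e^{-\rho^2/4\tau}\bigl(1+\alpha_\kappa\tau+\beta_\kappa\rho^2+\cdots\bigr)$ (as e.g.\ $K_3(t,r)=(4\pi t)^{-3/2}\tfrac{r}{\sinh r}e^{-t-r^2/4t}$ with $\tfrac{r}{\sinh r}=1-\tfrac{r^2}{6}+\cdots$), together with the scaling in \eqref{KnkappaEqn}. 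The elementary Gaussian integrals give $I_p(\tau)=1+\bigl(\alpha_\kappa+4\beta_\kappa+8c_2(p)\bigr)\tau+O(\tau^2)$; in particular there is no $\tau^{1/2}$ term. The universal constant $\alpha_\kappa+4\beta_\kappa$ is pinned down by the model: when $M$ is $(2+k)$-dimensional hyperbolic space of curvature $-\kappa^2$ and $\Sigma$ is a totally geodesic copy of $\mathbb H^2$, the ambient distance restricts to the intrinsic distance on $\Sigma$, so $K_{2,\kappa}(\tau,\dist_g(\cdot,p))$ restricts to the heat kernel of $\mathbb H^2$ of curvature $-\kappa^2$, which conserves mass; hence $I_p\equiv 1$, giving $\alpha_\kappa+4\beta_\kappa=-8c_2^\kappa$, where $c_2^\kappa=\kappa^2/12$ is the (constant) volume coefficient of that model. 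Therefore $a(p)=8\bigl(c_2(p)-c_2^\kappa\bigr)$.

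It remains to compute $c_2(p)$. Comparing the extrinsic ball $\Sigma\cap B^g_\rho(p)$ with the intrinsic geodesic ball of $\Sigma$ of the same radius — the radii differ at order $\rho^3$, by an amount governed solely by $A$ — and then applying the Gauss equation, one finds
\[
a(p)=\tfrac14\bigl(2|A(p)|^2-|H(p)|^2\bigr)-\tfrac23\bigl(\kappa^2+\sec_g(T_p\Sigma)\bigr).
\]
Since $\dim\Sigma=2$, the first summand equals $\sum_\alpha(\lambda^\alpha_1-\lambda^\alpha_2)^2\ge 0$, where $\lambda^\alpha_1,\lambda^\alpha_2$ are the principal curvatures of $\Sigma$ at $p$ in the unit normal direction $e_\alpha$; and $-\tfrac23\bigl(\kappa^2+\sec_g(T_p\Sigma)\bigr)\ge 0$ by the hypothesis $\sec_g\le-\kappa^2$. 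Hence $a(p)\ge 0$, so $a(p)=0$, which forces $\lambda^\alpha_1=\lambda^\alpha_2$ for every $\alpha$: each shape operator of $\Sigma$ at $p$ is a multiple of the identity, i.e.\ $\Sigma$ is umbilic at $p$. As $p$ was arbitrary, $\Sigma$ is totally umbilic.

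Finally, if $M$ is Euclidean and $\kappa=0$, write the second fundamental form as $\langle X,Y\rangle\,\vec h$ for a normal field $\vec h$; the Codazzi equations force $\vec h$ to be parallel in the normal bundle and of constant length. If $\vec h\equiv 0$, then $\Sigma$ is totally geodesic, hence a proper, connected, boundaryless subset of an affine $2$-plane, so the whole plane. If $|\vec h|\equiv c>0$, then $x\mapsto x+c^{-2}\vec h(x)$ is locally constant, so $\Sigma$ is a proper, connected, boundaryless subset of a round $2$-sphere, hence that sphere — but a round $2$-sphere has entropy strictly larger than $1$, contradicting $\lambda[\Sigma]=1$. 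So $\Sigma$ is an affine two-plane. The main obstacle is the computation of the coefficient $c_2(p)$: one must track the effect of the ambient curvature on both $dV$ and $\dist_g$, handle the mismatch between extrinsic and intrinsic geodesic balls via the Gauss equation, and — crucially for the sign argument — verify that the expansion of $I_p$ carries no term of order below $\tau$, so that $a(p)$ is literally $\lim_{\tau\to0^+}\tau^{-1}(I_p(\tau)-1)$.
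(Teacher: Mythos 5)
Your proposal follows essentially the same route as the paper: a small-time expansion of the localized $\kappa$-Gaussian density at each $p\in\Sigma$, whose first-order-in-$\tau$ coefficient is the Karp--Pinsky volume coefficient shifted by a model-space constant (which you pin down by calibrating against a totally geodesic $\mathbb{H}^2$, playing the same role as the paper's identity $a_n+2nb_n=-\tfrac13 n(n-1)$), and then the Gauss equation plus $\sec_g\le-\kappa^2$ to force $|\mathring{\mathbf{A}}_\Sigma^g|=0$. Two small slips: a nonzero $\rho^3$ term in $V(\rho)$ would actually contribute at order $\tau^{1/2}$, not $\tau^{3/2}$ (harmless here, since that term genuinely vanishes by the Karp--Pinsky expansion, but your parenthetical fallback is wrong); and in the Euclidean endgame the plane/sphere dichotomy from Codazzi is only valid component-by-component, so you still need to rule out $\Sigma$ having more than one component (e.g.\ two parallel planes have entropy $2$) -- the paper addresses this explicitly, your write-up quietly assumes connectedness.
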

\begin{rem}
	The result in Euclidean space follows from earlier work of L. Chen \cite{chenRigidityStabilitySubmanifolds2021} who was also able to obtain the same result for all dimensions and co-dimensions and also for incomplete surfaces.  However, his argument  uses a fairly sophisticated mean curvature flow construction. Alternatively, it should also be possible to obtain rigidity  for hypersurfaces that are boundaries of nice enough subsets of $\Real^{n+1}$ using the Gaussian isoperimetric inequality \cite{sudakovExtremalPropertiesHalfspaces1978, borellEhrhardInequality2003}.
\end{rem}
 The techniques also allow us to show rigidity for minimal hypersurfaces in Einstein manifolds with appropriate Einstein constant -- unlike the preceding theorem this is a setting where one may not have monotonicity of the entropy.
\begin{thm}\label{EinsteinRigidityThm}
	Let $(M,g)$ be an $n$-dimensional Riemannian manifold satisfying
	$$
	Ric_g=-(n-1) \kappa^2 g.
	$$
 If $\Sigma$ is a proper minimal hypersurface with $\lambda_g^\kappa[\Sigma]=1$, then $\Sigma$ is  totally geodesic.  
\end{thm}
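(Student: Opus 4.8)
Fix $p\in\Sigma$ and set $m=n-1=\dim\Sigma$, so that the entropy of $\Sigma$ is defined with the $m$-dimensional kernel $K_{m,\kappa}$. For $\tau>0$ put
$$
F_p(\tau)=\int_\Sigma\Phi_{m,\kappa}^{\tau,p}(0,\cdot)\,dV=\int_\Sigma K_{m,\kappa}\big(\tau,\dist_g(x,p)\big)\,dV(x).
$$
Since $F_p(\tau)$ is one of the integrals over which the supremum defining $\lambda_g^\kappa[\Sigma]$ is taken, $F_p(\tau)\le\lambda_g^\kappa[\Sigma]=1$ for every $\tau>0$; here properness is used to guarantee that $\Sigma\cap B_r(p)$ is compact (so the extrinsic geodesic balls have finite volume) and that $F_p(\tau)<\infty$. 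The plan is to compute the short-time expansion $F_p(\tau)=1+c(p)\,\tau+o(\tau)$, to show that the hypothesis $Ric_g=-(n-1)\kappa^2 g$ forces $c(p)=\tfrac12|A_\Sigma(p)|^2$, and then to conclude from $F_p(\tau)\le 1$ that $A_\Sigma(p)=0$; as $p$ is arbitrary this gives that $\Sigma$ is totally geodesic.

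By the coarea formula, $F_p(\tau)=\int_0^\infty K_{m,\kappa}(\tau,r)\,V'(r)\,dr$, where $V(r)$ is the $m$-dimensional volume of $\Sigma\cap B_r(p)$. Two expansions feed into this. First, an expansion of small extrinsic balls in the spirit of \cite{karpVolumeSmallExtrinsic1989}: writing $\Sigma$ near $p$ as a normal graph over $T_p\Sigma$ and using $g_{ab}(x)=\delta_{ab}-\tfrac13 R_{acbd}\,x^cx^d+O(|x|^3)$ in geodesic normal coordinates ($R$ the curvature tensor of $g$), one finds $V(r)=\omega_m r^m\big(1+c_2(p)r^2+O(r^3)\big)$; there is no $r$-linear term because the first-order data of the graph and of the metric both vanish at $p$, and $c_2(p)$ is an explicit linear combination of $|A_\Sigma(p)|^2$ (the mean-curvature term vanishing since $\Sigma$ is minimal) and the ambient quantity $\mathcal R_p:=\sum_{i,j=1}^m R_g(e_i,e_j,e_i,e_j)$ for an orthonormal frame $e_1,\dots,e_m$ of $T_p\Sigma$. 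Second, the short-time heat-kernel asymptotics: from $K_{m,\kappa}(\tau,r)=\kappa^m K_m(\kappa^2\tau,\kappa r)$ and the Minakshisundaram--Pleijel expansion on the model space form of constant curvature $-\kappa^2$,
$$
K_{m,\kappa}(\tau,r)=(4\pi\tau)^{-m/2}e^{-r^2/4\tau}\Big(1-\tfrac{m-1}{12}\kappa^2 r^2-\tfrac{m(m-1)}{6}\kappa^2\tau+o(\tau)\Big),
$$
the $\kappa$-terms recording the Ricci and scalar curvature of the model. Substituting both expansions and carrying out the elementary integrals $\int_0^\infty e^{-r^2/4\tau}r^j\,dr$ gives $F_p(\tau)=1+c(p)\tau+o(\tau)$ with
$$
c(p)=\tfrac12|A_\Sigma(p)|^2-\tfrac13\mathcal R_p-\tfrac{m(m-1)}{3}\kappa^2 .
$$
(The constant leading term recovers $\lambda_g^\kappa[\Sigma]\ge1$, as in \cite[Proposition 6.3]{BernsteinBhattacharyaCartan}.)

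The Einstein hypothesis now produces an exact cancellation. Conceptually, $K_{m,\kappa}(\tau,\dist_g(\cdot,p))$ is the heat kernel of the model space form of curvature $-\kappa^2$, for which a totally geodesic $m$-dimensional submanifold of the model has $F_p\equiv1$; hence the purely ambient-curvature part of $c(p)$ must vanish whenever the curvature of $M$ along $T_p\Sigma$ matches the model. Concretely, with $\nu$ the unit normal, $\sum_{i=1}^m R_g(e_i,\nu,e_i,\nu)=Ric_g(\nu,\nu)$, so $\mathcal R_p=\mathrm{Scal}_g(p)-2\,Ric_g(\nu,\nu)$; the hypothesis $Ric_g=-(n-1)\kappa^2 g$ gives $\mathrm{Scal}_g(p)=-n(n-1)\kappa^2$ and $Ric_g(\nu,\nu)=-(n-1)\kappa^2$, whence $\mathcal R_p=-(n-1)(n-2)\kappa^2=-m(m-1)\kappa^2$, and the last two terms of $c(p)$ cancel: $c(p)=\tfrac12|A_\Sigma(p)|^2$. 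Since $0\ge F_p(\tau)-1=\tfrac12|A_\Sigma(p)|^2\,\tau+o(\tau)$, dividing by $\tau>0$ and letting $\tau\to0^+$ forces $|A_\Sigma(p)|^2\le0$, i.e.\ $A_\Sigma(p)=0$; as $p\in\Sigma$ was arbitrary, $\Sigma$ is totally geodesic.

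The main obstacle is the precise determination of $c_2(p)$ in the Riemannian extrinsic-volume expansion, which requires combining three contributions of order $r^{m+2}$: the linearization of the induced volume form (producing $|A_\Sigma(p)|^2$ from the graph's Hessian and $\mathcal R_p$ from the normal-coordinate metric), the defect between $\Sigma\cap B_r(p)$ and a flat disc of radius $r$ in $T_p\Sigma$ (producing a further multiple of $|A_\Sigma(p)|^2$ via a spherical average of the second fundamental form), and the subleading term of the heat-kernel expansion; one must then verify that after the Gaussian integrations the $\kappa^2$-terms and the ambient-curvature terms cancel exactly under $Ric_g=-(n-1)\kappa^2 g$, leaving precisely $\tfrac12|A_\Sigma(p)|^2$ with the right sign. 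Everything else is bookkeeping. The same scheme is expected to prove Theorem~\ref{SurfaceRigidityThm}: one keeps the mean-curvature term, uses $\sec_g\le-\kappa^2$ (equivalently $\mathcal R_p\le-m(m-1)\kappa^2$) in place of the Einstein condition, and observes that in dimension $m=2$---and only then---the coefficient of $|\vec H(p)|^2$ in $c(p)$ vanishes, so that $c(p)$ dominates a nonnegative multiple of $|A_\Sigma(p)|^2-\tfrac12|\vec H(p)|^2$, the squared norm of the trace-free part, which $c(p)\le0$ then forces to vanish, i.e.\ $\Sigma$ is totally umbilic.
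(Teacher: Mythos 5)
Your sketch is correct and follows essentially the same route as the paper: coarea formula, a Karp--Pinsky-type extrinsic volume expansion, the hyperbolic heat-kernel coefficients (only the combination $a_m+2m\,b_m=-\tfrac13 m(m-1)$ matters, which you use correctly), and a Gauss-equation cancellation under the Einstein condition; your $c(p)=\tfrac12|A_\Sigma|^2-\tfrac13\mathcal R_p-\tfrac{m(m-1)}{3}\kappa^2$ is the paper's coefficient rewritten via Gauss in terms of ambient rather than intrinsic scalar curvature. The one piece you flag as ``the main obstacle''---pinning down $c_2(p)$ precisely---is exactly what the paper's Appendix (Theorem~\ref{GenKarpPinksy}) supplies, and your asserted value agrees with it.
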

Finally, we obtain a result for certain closed surfaces with $\lambda_g^0[\Sigma]=1$ without any assumptions on the ambient manifold.
\begin{thm}\label{UnivRigidityThm}
	Let $(M,g)$ be a $(2+k)$-dimensional Riemannian manifold.  If $\Sigma$ is a closed, connected,  orientable surface satisfying $\lambda_g^0[\Sigma]=1$, then the genus of $\Sigma$ satisfies $\mathrm{gen}(\Sigma)\leq 1$.  Moreover, if $\mathrm{gen}(\Sigma)=1$, then $\Sigma$ is totally geodesic.
\end{thm}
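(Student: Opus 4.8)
The plan is to read a pointwise curvature inequality on $\Sigma$ off the hypothesis $\lambda_g^0[\Sigma]=1$ by examining the defining integral of the entropy at small scales, and then to invoke Gauss--Bonnet.

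\emph{A small-scale expansion.} Fix $p\in\Sigma$ and set $\Phi_\tau=\Phi_{2,0}^{\tau,p}(0,\cdot)=(4\pi\tau)^{-1}e^{-\dist_g(\cdot,p)^2/4\tau}$. Working in geodesic normal coordinates for $(M,g)$ centred at $p$ -- in which $\dist_g(\cdot,p)$ is literally the Euclidean distance to the origin -- write $\Sigma$ near $p$ as a graph over $T_p\Sigma$ with vanishing value and gradient at $0$ and with Hessian at $0$ equal to the second fundamental form $A$ of $\Sigma$ at $p$. Expanding the induced area element with the help of $g_{ij}(x)=\delta_{ij}-\tfrac13 R_{ikjl}x^kx^l+O(|x|^3)$, rescaling $y=\sqrt\tau\,z$, and evaluating the resulting elementary Gaussian moments -- this is precisely the extrinsic-ball volume expansion of \cite{karpVolumeSmallExtrinsic1989} -- yields
$$
\int_\Sigma \Phi_\tau\, dV = 1 + \tau\left(\tfrac16|A(p)|^2+\tfrac1{12}|\vec{H}(p)|^2-\tfrac23 K(p)\right)+o(\tau)\qquad(\tau\to 0^+),
$$
where $\vec{H}$ is the mean curvature vector and $K$ the Gauss curvature of $\Sigma$. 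The ambient curvature terms produced by the area element are rewritten, via the Gauss equation, purely in terms of $K$, $|A|^2$ and $|\vec{H}|^2$, which is exactly why no curvature hypothesis on $(M,g)$ is required; and, $\Sigma$ being closed, the part of the integral over $\set{\dist_g(\cdot,p)\ge\delta}$ is $O(\tau^{-1}e^{-\delta^2/4\tau})$ and is absorbed into the error.

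\emph{The pointwise inequality and Gauss--Bonnet.} Since $p\in M$ and $\tau>0$, the quantity $\int_\Sigma\Phi_\tau\,dV$ is one of the competitors in the supremum defining $\lambda_g^0[\Sigma]$, so $\int_\Sigma\Phi_\tau\,dV\le\lambda_g^0[\Sigma]=1$ for every $\tau>0$. Dividing the expansion by $\tau$ and letting $\tau\to0^+$ forces the $\tau$-coefficient to be non-positive at $p$; as $p\in\Sigma$ was arbitrary,
$$
K\ \ge\ \tfrac14|A|^2+\tfrac18|\vec{H}|^2\ \ge\ 0\qquad\text{on }\Sigma.
$$
Because $\Sigma$ is closed, connected and orientable, Gauss--Bonnet now gives $0\le\int_\Sigma K\,dV=2\pi\chi(\Sigma)=2\pi\bigl(2-2\,\mathrm{gen}(\Sigma)\bigr)$, whence $\mathrm{gen}(\Sigma)\le1$. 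If $\mathrm{gen}(\Sigma)=1$, then $\int_\Sigma K\,dV=0$, and since $K\ge0$ this forces $K\equiv0$; the displayed inequality then yields $A\equiv0$, i.e.\ $\Sigma$ is totally geodesic.

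The crux -- and the step most at risk of an arithmetic slip -- is the expansion, in particular the three numerical coefficients. A useful consistency check is the Euclidean case: there the Gauss equation reads $K=\tfrac12(|\vec{H}|^2-|A|^2)$, and the $\tau$-coefficient collapses to $\tfrac12|\mathring{A}|^2\ge0$ with $\mathring{A}$ the trace-free part of $A$ -- in agreement with the general fact $\lambda[\Sigma]\ge1$, and with round spheres, which are umbilic, having a vanishing first-order term in the Gaussian density yet entropy strictly above $1$. One should also note that $\lambda_g^0[\Sigma]$ is by definition an integral over $\Sigma$, so if $\Sigma$ is only immersed then further sheets passing near $p$ and the Gaussian tail only contribute non-negatively, leaving the sign of the $\tau$-coefficient unchanged (and in fact ruling such sheets out, though that is not needed here).
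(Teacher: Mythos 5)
Your argument is correct and follows essentially the same route as the paper: the small-time expansion you write down is exactly Proposition~\ref{ShortTimeProp} with $n=2$, $\kappa=0$, $R_\Sigma^g=2K$ and $t=-\tau$, and comparing it with $\lambda_g^0[\Sigma]=1$ yields the same pointwise inequality $R_\Sigma^g\ge\tfrac12|\mathbf{A}_\Sigma^g|^2+\tfrac14|\mathbf{H}_\Sigma^g|^2$ before Gauss--Bonnet is applied. The only cosmetic difference is that you phrase the inequality in terms of the Gauss curvature $K$ and derive the coefficient by a direct normal-coordinate computation, whereas the paper keeps the scalar curvature and cites its generalized Karp--Pinsky expansion (Theorem~\ref{GenKarpPinksy}); the content and the final Gauss--Bonnet step are identical.
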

\begin{rem}
	This result is sharp in a certain sense as can be seen by considering a totally geodesic flat two-torus inside of a higher dimensional flat torus.
\end{rem}

We note that similar, but stronger, rigidity results for conformal volume of submanifolds of the sphere were observed by Bryant in \cite{bryantSurfacesConformalGeometry1988}.  One difference between \cite{bryantSurfacesConformalGeometry1988} and the current paper is that it is possible to arbitrarily  change the mean curvature at a point with a conformal transformation -- i.e., with a symmetry of the conformal volume.  For Colding-Minicozzi entropies this cannot be done with the natural symmetries.  However, flowing by mean curvature flow seems to play a similar role.

%

\section{Small-time asymptotics of Gaussian $\kappa$-densities of submanifolds}
Let $(M,g)$ be a Riemannian manifold and $\Sigma\subset M$ a proper $n$-dimensional submanifold.
We obtain the small time asymptotics of the (localized) pairing of the kernel $\Phi_{n,\kappa}^{0,x_0}(-t, \cdot)$  with any $n$-dimensional submanifold of $M$ when $x_0\in \Sigma$. 
 \begin{prop} \label{ShortTimeProp}
 	Let $(M,g)$ be a $(n+k)$-dimensional Riemannian manifold and $\Sigma^n \subset M$ a $n$-dimensional submanifold.  For $x_0\in \Sigma$, if $\mathcal{B}^g_{2R}(x_0)$ is proper in $M$ and $\Sigma_{2R}=\mathcal{B}^g_{2R}(x_0)\cap \Sigma$ is proper in $\mathcal{B}^g_{2R}(x_0)$,  then, for, any $\kappa\geq 0$,  the following asymptotic expansion holds:
	\begin{align*}
		\int_{\Sigma_{R}} \Phi_{n,\kappa}^{0, x_0} (t, \cdot )dV &= 1 - \frac{t}{3}  \left(\frac{1}{2}|\mathbf{A}_\Sigma^g(x_0)|^2+\frac{1}{4}|\mathbf{H}_\Sigma^g(x_0)|^2-R^g_\Sigma(x_0)-n(n-1) \kappa^2\right)\\
		&+O\left((-t)^{\frac{3}{2}}\right), t\to 0^-.
	\end{align*}
Where here $\mathbf{A}_\Sigma^g$ is the second fundamental form of $\Sigma$, $\mathbf{H}^g_{\Sigma}=\mathrm{tr}_g \mathbf{A}_{\Sigma}^g$ is the mean curvature vector of $\Sigma$ and $R_\Sigma^g$ is the scalar curvature of $\Sigma$.
\end{prop}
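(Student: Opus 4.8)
The plan is to exploit the fact that $K_{n,\kappa}(-t,\cdot)$ concentrates near $x_0$ at scale $\sqrt{-t}$ and to reduce the integral to a Gaussian integral over $T_{x_0}\Sigma$ into which feed three Taylor expansions: of the induced volume element of $\Sigma$, of the ambient distance to $x_0$ along intrinsic geodesics of $\Sigma$, and of the function $K_{n,\kappa}$ itself.

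\textbf{Localization.} Set $s=-t>0$, so that $\Phi_{n,\kappa}^{0,x_0}(t,\cdot)=K_{n,\kappa}(s,\dist_g(\cdot,x_0))$. Since $H_{n,\kappa}$ is the heat kernel of a simply connected space form of curvature $-\kappa^2$, one has a Gaussian bound $K_{n,\kappa}(s,r)\leq C\, s^{-n/2}e^{-r^2/(4s)}$ for $r$ in a bounded interval and $s$ small (this may also be read off the explicit $K_n$ of \cite{daviesHeatKernelBounds1988}); as $\overline{\Sigma_R}$ is compact, hence of finite volume, the contribution to the integral of $\Sigma_R\setminus\mathcal{B}^g_{\rho_0}(x_0)$ is $O(e^{-c/s})$ for any fixed small $\rho_0>0$. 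Choosing $\rho_0$ small enough that $\mathcal{B}^g_{\rho_0}(x_0)\cap\Sigma$ is a single normal graph over $T_{x_0}\Sigma$ lying inside the intrinsic injectivity ball of $\Sigma$ at $x_0$, we may pass to geodesic polar coordinates $(\rho,\theta)\in(0,\rho_0')\times S^{n-1}$ of $\Sigma$ centered at $x_0$, extending the $\rho$-range to $(0,\infty)$ at the cost of another $O(e^{-c/s})$.

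\textbf{The three expansions.} In these coordinates $dV=\mathcal{J}(\rho,\theta)\,d\rho\,d\theta$ with $\mathcal{J}(\rho,\theta)=\rho^{n-1}\big(1-\tfrac16\mathrm{Ric}_\Sigma^g(x_0)(\theta,\theta)\rho^2+O(\rho^3)\big)$, the standard normal-coordinate expansion of a Riemannian volume density. Letting $\gamma_\theta$ denote the $\Sigma$-geodesic from $x_0$ with $\dot\gamma_\theta(0)=\theta$, the relation $\nabla^g_{\dot\gamma_\theta}\dot\gamma_\theta=\mathbf{A}^g_\Sigma(\dot\gamma_\theta,\dot\gamma_\theta)$ together with a Taylor expansion in normal coordinates of $M$ at $x_0$ — in which $\dist_g(\cdot,x_0)^2$ is the coordinate norm squared, the $\rho^3$-coefficient vanishes because $\mathbf{A}^g_\Sigma(\theta,\theta)\perp\theta$, and the potential curvature contribution to the $\rho^4$-coefficient vanishes because $R_g(\theta,\theta,\theta,\theta)=0$ — gives, as in \cite{karpVolumeSmallExtrinsic1989},
$$\dist_g(\gamma_\theta(\rho),x_0)^2=\rho^2-\tfrac1{12}|\mathbf{A}^g_\Sigma(x_0)(\theta,\theta)|^2\rho^4+O(\rho^5).$$
Finally, since $H_{n,\kappa}$ is the space-form heat kernel, its leading parametrix coefficient is $u_0(r)=\big(\tfrac{\sinh\kappa r}{\kappa r}\big)^{-(n-1)/2}=1-\tfrac{(n-1)\kappa^2}{12}r^2+O(r^4)$ and its first subleading coefficient equals, on the diagonal, $\tfrac16$ times the scalar curvature $-n(n-1)\kappa^2$, whence
$$K_{n,\kappa}(s,r)=(4\pi s)^{-n/2}e^{-r^2/(4s)}\Big(1-\tfrac{(n-1)\kappa^2}{12}r^2-\tfrac{n(n-1)\kappa^2}{6}s+O\big((r^2+s)^2\big)\Big),$$
which can also be checked directly from the $K_n$ (for instance from the formula for $K_3$) or from their recursion.

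\textbf{Integration and conclusion.} Substituting the three expansions, writing $e^{-r^2/(4s)}=e^{-\rho^2/(4s)}\big(1+\tfrac1{48s}|\mathbf{A}^g_\Sigma(x_0)(\theta,\theta)|^2\rho^4+\cdots\big)$, and keeping only terms contributing at order $s$ (under the Gaussian weight $\rho\sim\sqrt s$, so odd powers of $\rho$ and products of corrections are $O(s^{3/2})$), one evaluates the $\rho$-integral with the Gaussian moments $(4\pi s)^{-n/2}|S^{n-1}|\int_0^\infty e^{-\rho^2/(4s)}\rho^{n-1+2j}\,d\rho$, which equal $1,\ 2ns,\ 4n(n+2)s^2$ for $j=0,1,2$, and the $\theta$-average with $\frac{1}{|S^{n-1}|}\int_{S^{n-1}}Q(\theta,\theta)\,d\theta=\tfrac1n\mathrm{tr}\,Q$ for symmetric $Q$ and $\frac{1}{|S^{n-1}|}\int_{S^{n-1}}|\mathbf{A}^g_\Sigma(\theta,\theta)|^2\,d\theta=\tfrac1{n(n+2)}\big(2|\mathbf{A}^g_\Sigma|^2+|\mathbf{H}^g_\Sigma|^2\big)$. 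The four surviving contributions — namely $\tfrac{s}{12}(2|\mathbf{A}^g_\Sigma|^2+|\mathbf{H}^g_\Sigma|^2)$ from the $\rho^4$-term, $-\tfrac{n(n-1)\kappa^2}{6}s$ from each of the $\kappa^2 r^2$- and $\kappa^2 s$-terms, and $-\tfrac{s}{3}R^g_\Sigma$ from the volume element (using $\mathrm{tr}\,\mathrm{Ric}^g_\Sigma=R^g_\Sigma$) — add up to $\tfrac{s}{3}\big(\tfrac12|\mathbf{A}^g_\Sigma|^2+\tfrac14|\mathbf{H}^g_\Sigma|^2-R^g_\Sigma-n(n-1)\kappa^2\big)$, and substituting $s=-t$ yields the stated formula. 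The main work lies in the kernel expansion of the previous step — establishing it with enough uniformity for $r\lesssim\sqrt s$ — and in the error bookkeeping here, i.e.\ verifying that every discarded term (the tails beyond $\rho_0$, the odd powers of $\rho$, the products of lower-order corrections) is genuinely $O(s^{3/2})$.
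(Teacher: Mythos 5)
Your proposal is correct and arrives at the stated expansion, but the route is genuinely different from the paper's. The paper writes $\int_{\Sigma_{R_0}}\Phi_{n,\kappa}^{0,x_0}(t,\cdot)\,dV$ via the co-area formula as an integral of $K_{n,\kappa}(-t,s)$ against $\frac{d}{ds}|\mathcal{B}_s^g(x_0)\cap\Sigma|$, integrates by parts, invokes the Millison-type identity $\partial_r K_{n,\kappa}(-t,r)=-2\pi e^{-n\kappa^2 t}\kappa^{-1}\sinh(\kappa r)K_{n+2,\kappa}(-t,r)$ to shift dimension, and then substitutes the generalized Karp--Pinsky volume expansion (Theorem \ref{GenKarpPinksy}) into a pure $K_{n+2,\kappa}$-integral evaluated with Lemma \ref{GaussianIntLem}. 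Crucially, the paper never needs the individual coefficients $a_n,b_n$ of the near-diagonal kernel expansion: the normalization $\int K_{n,\kappa}\,dV_{\mathbb{H}^n}=1$ pins down exactly the combination $a_n+2nb_n$ that enters. You instead pass to intrinsic $\Sigma$-polar coordinates, expand the volume density, the ambient distance along $\Sigma$-geodesics, and the kernel itself (identifying $a_n=-\tfrac{n(n-1)}{6}$, $b_n=-\tfrac{n-1}{12}$ from the Minakshisundaram--Pleijel parametrix and its $u_0$, $u_1$ coefficients), and then evaluate directly with Gaussian moments and the spherical averages of $\theta^i\theta^j$ and $\theta^i\theta^j\theta^k\theta^l$. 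I verified your $\rho^4$-coefficient $-\tfrac{1}{12}|\mathbf{A}(\theta,\theta)|^2$ in the distance expansion (the $R(\theta,\theta,\theta,\theta)$ term does vanish, and $\langle\theta,\gamma'''(0)\rangle=-|\mathbf{A}(\theta,\theta)|^2$), your moment values $1,\,2ns,\,4n(n+2)s^2$, the average $\frac{1}{n(n+2)}(2|\mathbf{A}|^2+|\mathbf{H}|^2)$, and the four contributions; they sum to the stated coefficient. Your approach buys self-containment and avoids both the Millison identity and the blackboxed volume expansion (effectively folding the Karp--Pinsky computation into the integral); the paper's approach buys cleaner error bookkeeping via Davies' explicit bounds on $K_n$ and sidesteps the need to justify the parametrix coefficients and its remainder uniformly for $r\lesssim\sqrt{s}$ — the point you correctly flag as the main remaining work.
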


In order to prove this, we will need a pair of elementary lemmata.
\begin{lem}\label{GaussianIntLem}
  For any $R>0$ and $n\geq 1, k\geq 0$ integers one has
  $$ 
  	\int_0^R K_{n,0}(t, \rho) \rho^{n+k-1} d\rho =\frac{(4\pi t)^{k/2}}{|\mathbb{S}^{n+k-1}|_{\Real}}+O(e^{-\frac{R^2}{4t}}), t\to 0^+.
  $$
 There are also constants $C=C(R,n)$ so, for $0<t\leq \frac{1}{2(n-1)}R$,
  $$
  \int_{R}^\infty K_{n,1}(t, \rho) \sinh^{n-1}(\rho)d\rho \leq C e^{-\frac{R^2}{16 t}}.
  $$
\end{lem}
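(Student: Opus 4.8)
Both parts reduce to elementary one-variable estimates once the kernels are put in usable form; the two halves are independent.

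For the first identity, substitute $K_{n,0}(t,\rho)=(4\pi t)^{-n/2}e^{-\rho^2/(4t)}$ and write $\int_0^R=\int_0^\infty-\int_R^\infty$. The substitution $\rho=2\sqrt t\,v$ turns $\int_0^\infty K_{n,0}(t,\rho)\rho^{n+k-1}\,d\rho$ into $(4\pi t)^{-n/2}(2\sqrt t)^{n+k}\int_0^\infty e^{-v^2}v^{n+k-1}\,dv=(4\pi t)^{-n/2}(2\sqrt t)^{n+k}\tfrac12\Gamma\!\left(\tfrac{n+k}2\right)$, and a one-line manipulation rewrites this as $\tfrac{2^{k-1}}{\pi^{n/2}}\Gamma\!\left(\tfrac{n+k}2\right)t^{k/2}$, which is exactly $(4\pi t)^{k/2}/|\mathbb S^{n+k-1}|_{\Real}$ once one inserts $|\mathbb S^{m-1}|_{\Real}=2\pi^{m/2}/\Gamma(m/2)$. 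For the tail, the substitution $\rho=R+s$ gives $\int_R^\infty e^{-\rho^2/(4t)}\rho^{n+k-1}\,d\rho=e^{-R^2/(4t)}\int_0^\infty e^{-Rs/(2t)-s^2/(4t)}(R+s)^{n+k-1}\,ds$, and rescaling $s\mapsto\tfrac{2t}{R}s$ bounds the remaining integral by $Ct$; hence the tail is $O\!\left(t^{1-n/2}e^{-R^2/(4t)}\right)$, which is absorbed into the asserted $O(e^{-R^2/(4t)})$ error.

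For the hyperbolic estimate the starting point is the Gaussian-type pointwise bound for $K_{n,1}=K_n$, the (normalized) heat kernel of $\mathbb H^n$: for $t$ in a bounded range and all $r>0$,
\[
  K_n(t,r)\ \le\ C_n\,t^{-n/2}\,(1+r)^{M_n}\,e^{-\frac{(n-1)r}{2}-\frac{r^2}{4t}},
\]
with $C_n,M_n$ depending only on $n$. This is precisely the upper half of the two-sided bounds of Davies and Mandouvalos \cite{daviesHeatKernelBounds1988} (the source of the $K_n$), after absorbing the factor $(1+r+t)^{(n-3)/2}$ into $(1+r)^{M_n}$ for bounded $t$; for odd $n$ it can also be checked directly by induction from $K_1(t,r)=(4\pi t)^{-1/2}e^{-r^2/(4t)}$ and $K_3(t,r)=(4\pi t)^{-3/2}\tfrac{r}{\sinh r}e^{-t-r^2/(4t)}$ via the recursion $K_{n+2}=-\tfrac{e^{-nt}}{2\pi\sinh r}\,\partial_r K_n$: differentiation produces a factor $\tfrac{r}{2t}$ upgrading $t^{-n/2}$ to $t^{-(n+2)/2}$, while the factors $\tfrac{r}{\sinh r}$ and $\tfrac1{\sinh r}$ only improve the decay in $r$, so inductively $K_n=(4\pi t)^{-n/2}e^{-c_nt-r^2/(4t)}\sum_j t^jQ_{n,j}(r)$ with each $Q_{n,j}$ bounded by a constant times $(1+r)^{M_n}e^{-(n-1)r/2}$. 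Granting this bound, the estimate finishes quickly. Since $\sinh^{n-1}r\le 2^{-(n-1)}e^{(n-1)r}$, the volume growth exactly cancels the $e^{-(n-1)r/2}$ factor, giving $K_n(t,\rho)\sinh^{n-1}\rho\le C_n'\,t^{-n/2}(1+\rho)^{M_n}e^{\frac{(n-1)\rho}{2}-\frac{\rho^2}{4t}}$. For $\rho\ge R$ one has $\rho^2\ge\tfrac12 R^2+\tfrac12\rho^2$, hence $e^{-\rho^2/(4t)}\le e^{-R^2/(8t)}e^{-\rho^2/(8t)}$, and therefore
\[
  \int_R^\infty K_n(t,\rho)\sinh^{n-1}\rho\,d\rho\ \le\ C_n'\,t^{-n/2}e^{-R^2/(8t)}\int_0^\infty(1+\rho)^{M_n}e^{\frac{(n-1)\rho}{2}-\frac{\rho^2}{8t}}\,d\rho.
\]
Completing the square in the last integral (or simply invoking dominated convergence) shows it is bounded by a constant depending only on $R$ and $n$ for $t\in\bigl(0,\tfrac{R}{2(n-1)}\bigr]$ — it tends to $0$ as $t\to0^+$ — and finally $t^{-n/2}e^{-R^2/(8t)}\le C(R,n)e^{-R^2/(16t)}$ on this range, because $t^{-n/2}e^{-R^2/(16t)}$ is bounded there. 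Combining these gives $\int_R^\infty K_{n,1}(t,\rho)\sinh^{n-1}\rho\,d\rho\le C(R,n)e^{-R^2/(16t)}$.

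The only step that is not routine bookkeeping is the pointwise bound on $K_n$ for general (in particular even) $n$: this is where the recursive structure — and the real work of \cite{daviesHeatKernelBounds1988} — enters, the even-dimensional kernels being the least transparent since they are not elementary. Everything downstream is standard Gaussian tail estimation. One could in principle avoid the pointwise bound by integrating the recursion $K_{n+2}=-\tfrac{e^{-nt}}{2\pi\sinh r}\partial_r K_n$ directly, integrating by parts (the boundary term at $\infty$ vanishing since $e^{-r^2/(4t)}$ beats $\sinh^n r$) and inducting on $n$, but then one must simultaneously carry along $\int_R^\infty K_n\sinh^{n-1}$ and the single value $K_n(t,R)$, which is more cumbersome than it is worth.
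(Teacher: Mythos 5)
Your proof is correct and follows essentially the same route as the paper: an explicit Gaussian moment computation plus a tail bound for the first identity, and for the second the Davies--Mandouvalos pointwise upper bound for $K_{n,1}$ integrated against $\sinh^{n-1}\rho$, finished with a Gaussian tail estimate. The only cosmetic difference is in the last step, where the paper folds the linear term $e^{(n-1)\rho/2}$ into a shifted kernel $K_{n,0}(t,\rho-t(n-1))$ and invokes the first computation, whereas you split the exponent as $e^{-\rho^2/(4t)}\le e^{-R^2/(8t)}e^{-\rho^2/(8t)}$ for $\rho\ge R$ and complete the square in what remains; both are standard and both yield the same $e^{-R^2/(16t)}$ rate.
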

\begin{proof}
	Observe that, for all $k\geq 0$, 
	\begin{align*}
		\int_0^\infty K_{n,0}(t, \rho) \rho^{n+k-1} d\rho =(4\pi t)^{k/2} \int_0^\infty K_{n+k,0}(t,\rho) \rho^{n+k-1} d\rho =\frac{(4\pi t)^{k/2}}{|\mathbb{S}^{n+k-1}|_{\Real}}.
	\end{align*}
	While for any $R>0$ and for small times we have
	$$
	\int_R^\infty K_{n,0} (t,\rho) \rho^{n+k-1} d\rho=O(e^{-\frac{R^2}{4t}}), t\to 0^+.
	$$
The first claim is an immediate consequence.	

For the second claim we observe that \cite[Theorem 3.1]{daviesHeatKernelBounds1988} yields
$$
K_{n,1}(t,\rho)\leq C (1+\rho+t)^{\frac{1}{2}n-\frac{3}{2}}(1+\rho)e^{-\frac{1}{2} (n-1)^2 t-\frac{1}{2}(n-1)\rho} K_{n,0}(t,\rho).
$$
One readily checks that, for $\rho\geq R>0$ and $ \frac{R}{2(n-1)}\geq t$ that, 
\begin{align*}
K_{n,1}(t,\rho)& \sinh^{n-1}(\rho) \leq C' (1+\rho+t)^{\frac{1}{2}n-\frac{3}{2}}(1+\rho)K_{n,0}(t, \rho-t(n-1))\\
&\leq C'' R^{-\frac{1}{2}(n-1)} (\rho-t(n-1))^{n-1} K_{n,0}(t,\rho-t(n-1)). 
\end{align*}
where $C''=C''(R,n)$. 
It follows that, for such $R$ and $t$,
\begin{align*}
 \int_{R}^\infty &K_{n,1}(t, \rho) \sinh^{n-1}(\rho)d\rho \leq C'' R^{-\frac{1}{2}(n-1)} \int_{R-t(n-1)}^\infty  u^{n-1} K_{n,0}(t,u)du\\
 &\leq C'' R^{-\frac{1}{2}(n-1)} \int_{\frac{R}{2}}^\infty  u^{n-1} K_{n,0}(t,u)du\leq C''' e^{-\frac{R^2}{16 t}}.
\end{align*}
Here $C'''=C'''(R,n)$ and we used the first computation of the proof. 
\end{proof}

We also need information about the leading order asymptotics of $K_{n,\kappa}$ near the the space-time origin -- the expansions was established in \cite{daviesHeatKernelBounds1988}, but we needed some more information about the relationship between certain coefficients. 
\begin{lem}
Fix $\kappa>0$. 	There is a constant $C_n>0$ so that, for $0\leq t\leq \kappa^{-2}$ and $0\leq \rho \leq \kappa^{-1}$,
\begin{align*}
	|K_{n,\kappa}(t,\rho) -(1+\kappa^2 a_n t+\kappa ^2 b_n \rho^2) K_{n,0}(t,\rho)|\leq C_n\kappa^4(t+\rho^2)^2 K_{n,0}(t,\rho)
\end{align*}
where $a_n,b_n$ satisfy
\begin{equation}\label{CoeffFormula}
a_n+2n b_n=-\frac{1}{3}n(n-1). 
\end{equation}
\end{lem}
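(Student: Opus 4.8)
I would separate the statement into two parts: the pointwise estimate on $K_{n,\kappa}$, and the identity \eqref{CoeffFormula}. For the estimate the plan is to reduce to $\kappa=1$ by the scaling in \eqref{KnkappaEqn} and then Taylor expand the short-time expansion of the hyperbolic heat kernel; for the identity one can either read off the heat coefficients or -- more in the spirit of this paper -- integrate. Since $K_{n,\kappa}(t,\rho)=\kappa^n K_n(\kappa^2t,\kappa\rho)$ while a direct computation gives $K_{n,0}(\kappa^2t,\kappa\rho)=\kappa^{-n}K_{n,0}(t,\rho)$, the ratio $K_{n,\kappa}(t,\rho)/K_{n,0}(t,\rho)$ equals $F_n(\kappa^2t,\kappa\rho)$, where $F_n(s,u):=K_n(s,u)/K_{n,0}(s,u)$ and $K_n$ is the heat kernel of the curvature $-1$ space form. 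Writing $s=\kappa^2t\in[0,1]$ and $u=\kappa\rho\in[0,1]$, one has $\kappa^4(t+\rho^2)^2=(s+u^2)^2$, so the estimate is equivalent to $|F_n(s,u)-1-a_ns-b_nu^2|\le C_n(s+u^2)^2$ on $[0,1]^2$. On the region of $[0,1]^2$ where $s+u^2$ is bounded below this is automatic once $C_n$ is large, because $F_n$ is bounded there -- indeed the Davies upper bound quoted in the proof of Lemma~\ref{GaussianIntLem} (with $\kappa=1$) bounds $F_n$ uniformly on all of $[0,1]^2$ -- so only a neighbourhood of the origin matters.

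Near the origin I would invoke the short-time parametrix expansion of the hyperbolic heat kernel established in \cite{daviesHeatKernelBounds1988}, $K_n(s,u)=K_{n,0}(s,u)\bigl(u_0(u)+u_1(u)\,s+O((s+u^2)^2)\bigr)$, where $u_0,u_1$ are smooth functions of $u^2$. Taylor expanding $u_0(u)=u_0(0)+b_nu^2+O(u^4)$ and $u_1(u)=u_1(0)+O(u^2)$ and collecting terms -- using that $s^2$, $su^2$ and $u^4$ are each $\le(s+u^2)^2$ -- yields $F_n(s,u)=1+a_ns+b_nu^2+O((s+u^2)^2)$ with $a_n=u_1(0)$ and $u_0(0)=1$ (the latter since $u_0$ is the leading heat coefficient). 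This proves the estimate.

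For \eqref{CoeffFormula}: the Minakshisundaram--Pleijel coefficients of any Riemannian manifold satisfy $u_0=\Theta^{-1/2}$, with $\Theta$ the Riemannian volume density in geodesic normal coordinates, and $u_1(x,x)=\tfrac16\mathrm{Scal}(x)$; for the curvature $-1$ space form $\Theta(u)=(\sinh u/u)^{n-1}$ and $\mathrm{Scal}=-n(n-1)$, so $b_n=-\tfrac{n-1}{12}$ and $a_n=u_1(0)=-\tfrac16n(n-1)$, whence $a_n+2nb_n=-\tfrac13n(n-1)$. Alternatively, avoiding curvature identities, one uses mass conservation: $|\mathbb{S}^{n-1}|_{\Real}\int_0^\infty K_n(s,u)\sinh^{n-1}(u)\,du\equiv1$. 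Split the integral at a fixed $R\in(0,1)$; by the second part of Lemma~\ref{GaussianIntLem} the tail over $[R,\infty)$ is $O(e^{-R^2/16s})$, and on $[0,R]$ substitute the expansion of $K_n$ together with $\sinh^{n-1}(u)=u^{n-1}\bigl(1+\tfrac{n-1}{6}u^2+O(u^4)\bigr)$. The Gaussian moments $|\mathbb{S}^{n-1}|_{\Real}\int_0^\infty K_{n,0}(s,u)u^{n-1}\,du=1$ and $|\mathbb{S}^{n-1}|_{\Real}\int_0^\infty K_{n,0}(s,u)u^{n+1}\,du=2ns$ -- the latter from the first computation in Lemma~\ref{GaussianIntLem} and $|\mathbb{S}^{n+1}|_{\Real}=\tfrac{2\pi}{n}|\mathbb{S}^{n-1}|_{\Real}$ -- reduce the integral over $[0,R]$ to $1+\bigl(a_n+2nb_n+\tfrac13n(n-1)\bigr)s+O(s^2)$, so the coefficient of $s$ must vanish.

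The one genuinely non-elementary ingredient, hence the main obstacle, is the imported fact: the short-time expansion of the hyperbolic heat kernel with remainder controlled uniformly for $(s,u)$ in a full neighbourhood of the origin, together with the identification of $u_0$ and $u_1(0)$. This is classical but must be read off with some care from the explicit recursive formulas of \cite{daviesHeatKernelBounds1988}; the scaling reduction, the Taylor expansions, and the Gaussian-moment bookkeeping are all routine.
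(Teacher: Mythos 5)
Your argument is correct and covers both parts of the lemma. For the estimate you do essentially what the paper does: reduce to $\kappa=1$ by the scaling built into the definition of $K_{n,\kappa}$, dispose of the region where $s+u^2$ is bounded below using the Davies upper bound from \cite{daviesHeatKernelBounds1988}, and near the origin invoke Davies' short-time parametrix expansion to produce the $a_n,b_n$ and the quadratic remainder. The paper is terser here (it simply cites Davies for the existence of $a_n,b_n,C_n$), but the content is the same.

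Where you genuinely diverge is in establishing \eqref{CoeffFormula}. The paper obtains it \emph{indirectly}, by conservation of mass $|\mathbb{S}^{n-1}|_{\Real}\int_0^\infty K_{n,1}(t,\rho)\sinh^{n-1}(\rho)\,d\rho=1$: split the integral at $R=1$, kill the tail via the second estimate of Lemma \ref{GaussianIntLem}, expand $\sinh^{n-1}(\rho)=\rho^{n-1}(1+\tfrac{n-1}{6}\rho^2+O(\rho^4))$, and match the coefficient of $t$ using the Gaussian moments and \eqref{SphereVolEqn}. That is your ``alternative'' argument, and the bookkeeping in it matches the paper's exactly. Your primary route is different and more direct: identify $u_0=\Theta^{-1/2}$ with $\Theta(u)=(\sinh u/u)^{n-1}$ giving $b_n=-\tfrac{n-1}{12}$, and $u_1(0)=\tfrac16\mathrm{Scal}=-\tfrac16 n(n-1)$ giving $a_n$, and just add. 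This buys you the individual values of $a_n$ and $b_n$ (not merely the combination), and it is the standard Minakshisundaram--Pleijel fact, so no integral bookkeeping is needed. The trade-off is that it relies on the explicit identification of the heat coefficients (and a choice of sign convention that you should be careful to pin down against Davies' $K_n$, since this is where errors typically creep in), whereas the paper's conservation-of-mass argument only uses the \emph{existence} of the expansion and nothing about what the coefficients are, which is presumably why the author chose it. A quick sanity check in the case $n=3$ using the displayed formula $K_3(t,r)=(4\pi t)^{-3/2}\tfrac{r}{\sinh r}e^{-t-r^2/4t}$ confirms your values: $a_3=-1$, $b_3=-\tfrac16$, and $a_3+6b_3=-2=-\tfrac13\cdot 3\cdot 2$.
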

\begin{proof}
	With out loss of generality we may assume $\kappa=1$, as the general result immediately follows from this case and the definition of $K_{n,\kappa}$.  The existence of the $a_n, b_n$ and $C_n$ follow from \cite{daviesHeatKernelBounds1988}.  To conclude the proof we observe that for, all $t>0$,
	\begin{align*}
		|\mathbb{S}^{n-1}|_\Real \int_0^\infty K_{n,1}(t,\rho) \sinh^{n-1}(\rho) d\rho = 1.
	\end{align*}

 Using the second estimate of Lemma \ref{GaussianIntLem} with $R=1$ and  the expansion $\sinh^{n-1}(\rho)=\rho+\frac{n-1}{6}\rho^3+O(\rho^5)$,  we obtain
   \begin{align*}
   	 |\mathbb{S}^{n-1}|_{\Real}^{-1}&= \int_0^1 K_{n,1}(t,\rho) \sinh^{n-1}(\rho) d\rho+O(t^2),t\to 0^+\\
   	 &= \int_0^1 K_{n,0}(t, \rho)(1+a_n t+b_n \rho^2+\frac{n-1}{6}\rho^2) \rho^{n-1} d\rho+O(t^2),t\to 0^+\\
   	 &=|\mathbb{S}^{n-1}|_{\Real}^{-1}\left( 1+\right(a_n+2 n  b_n +\frac{1}{3} n(n-1)\left) t\right)+O(t^2),t\to 0^+. 
\end{align*}
where we used that 
\begin{equation}\label{SphereVolEqn}
|\mathbb{S}^{n+k+1}|_{\Real}=\frac{2\pi}{n+k} |\mathbb{S}^{n+k-1}|_{\Real}.
\end{equation}
Hence, $
a_n+2n b_n=-\frac{1}{3}n(n-1). $
%
\end{proof}

\begin{proof}[Proof of Proposition \ref{ShortTimeProp}]
For the fixed point $x_0\in \Sigma$, choose $R\geq R_0>0$ small enough so that $\mathcal{B}_{R_0}^g(x_0)$ is geodesically convex.  Up to shrinking $R_0$, we may assume that the expansion of Theorem \ref{GenKarpPinksy} holds for $|\mathcal{B}_s^g(x_0)\cap \Sigma|_g$ for $0<s<R_0$.  As $\Sigma_R$ is proper in $\mathcal{B}^g_{2R}(\Sigma)$, we have $|\Sigma_R|_g$ finite.  Hence, by the pointwise estimates on $K_{n,\kappa}$ that follow from \cite[Theorem 3.1]{daviesHeatKernelBounds1988}, we have, for $-t$ sufficiently small,
$$
\int_{\Sigma_R \setminus \mathcal{B}^g_{R_0}(x_0)} \Phi_{n,\kappa}^{0, x_0}(-t, \cdot) dV\leq C |\Sigma_R|_g e^{-\frac{R_0^2}{-16 t}}=O((-t)^{\frac{3}{2}}).
$$
where here $C=C(n,\kappa)$.

Hence, it is enough to prove the result for $\Sigma_{R_0}=\Sigma\cap \mathcal{B}_{R_0}^g(x_0)$. 
Using the co-area formula we have, with $\rho(q)=\dist_g(q,x_0)$,
 \begin{align*}
 	\int_{\Sigma_{R_0}}\Phi_{n,\kappa}^{0,x_0} (t, \cdot) dV &= \int_0^{R_0} \int_{\partial \mathcal{B}_s^g (x_0)} \Phi_{n,\kappa}^{0,x_0} (t, \cdot) \frac{1}{|\nabla_\Sigma \rho|} dV ds &\\
 	&= \int_0^{R_0} K_{n,\kappa}(-t,s) \int_{\partial \mathcal{B}_{s}^g (x_0)}  \frac{1}{|\nabla_\Sigma \rho|} dV ds \\
 	&=\int_0^{R_0} K_{n,\kappa}(-t,s) \frac{d}{ds}|\mathcal{B}_s^g(x_0)\cap \Sigma| ds \\
 	&=K_{n,\kappa}(-t, R_0) |\Sigma_{R_0}|_g-\int_0^\infty \partial_r K_{n,\kappa}(-t,s) |\mathcal{B}_s^g(x_0)\cap \Sigma| ds.
 \end{align*}
Appealing again to \cite[Theorem 3.1]{daviesHeatKernelBounds1988}, we have
$$
K_{n,\kappa}(-t, R_0) |\Sigma_{R_0}|_g\leq  C |\Sigma_{R}|_g e^{-\frac{R_0^2}{-16 t}}=O((-t)^{\frac{3}{2}}).
$$

 Moreover, by direct computation we have 
 $$
 \partial_r K_{n,0}(-t,r)=-\frac{r}{2t}K_{n,0}(-t,r)=-2\pi r K_{n+2,0}(-t,r)
 $$
 and, for $\kappa>0$, the generalized Millison identity (e.g., \cite{daviesHeatKernelBounds1988, BernsteinBhattacharyaCartan}) give
 $$
 \partial_r K_{n,\kappa}(-t,r)=-2\pi e^{-n\kappa^2t}\kappa^{-1} \sinh(\kappa r) K_{n+2,\kappa r}(-t,r).
 $$
 Hence, when $\kappa=0$, we may use Theorem \ref{GenKarpPinksy} and Lemma \ref{GaussianIntLem} to obtain
 \begin{align*}
 	\int_{\Sigma_{R_0}}&\Phi_{n,\kappa}^{0,x_0} (-t, \cdot) dV= 2\pi \int_0^{R_0} K_{n+2,0}(-t,s) s |\mathcal{B}^g_s(x_0)\cap \Sigma|ds+O((-t)^{\frac{3}{2}}) \\
 	&= 2\pi |B_1^n|_\Real\int_0^{R_0} K_{n+2,0}(-t,s) \left( s^n+A s^{n+1} +O(s^{n+2})\right) ds+O((-t)^{\frac{3}{2}})\\
 	&=  |\mathbb{S}^{n+1}|_\Real\left( |\mathbb{S}^{n+1}|_{\Real}^{-1} +4\pi |\mathbb{S}^{n+3}|_{\Real}^{-1} A (-t) \right) +O((-t)^{\frac{3}{2}})\\
 	&= 1-2(n+2) A t+ O((-t)^{\frac{3}{2}}).
 \end{align*}
Where we used \eqref{SphereVolEqn} and the coefficient $A$ from Theorem \ref{GenKarpPinksy} is 
$$
A=\frac{1}{6(n+2)}\left( \frac{1}{2}|\mathbf{A}_\Sigma^g(p)|_g^2 +\frac{1}{4}|\mathbf{H}_\Sigma^g(p)|^2_g -R_{\Sigma}^g(p) \right).
$$
The expansion in the $\kappa=0$ case follows.  
 
When $\kappa>0$ the same reasoning yields 
 \begin{align*}
  	\int_{\Sigma_{R_0}}&\Phi_{n,\kappa}^{0,x_0} (-t, \cdot) dV= 2\pi e^{-n \kappa^2 t} \kappa^{-1} \int_0^{R_0} K_{n+2,\kappa}(-t,s) \sinh(\kappa s) |\mathcal{B}_s^g(x_0)\cap \Sigma|_g ds   \\
  	&= 	2\pi |B_1^n|_{\Real}e^{-n \kappa^2 t} \int_0^{R_0} K_{n+2,\kappa}(-t,s) ( s+ \frac{\kappa^2}{6}s^3 ) (s^{n}+ As^{n+2}+O(s^{n+3}))) ds \\
  	&= |\mathbb{S}^{n+1}|_{\Real} e^{-n \kappa^2 t}  \int_0^{R_0} K_{n+2, \kappa}(-t,s) (1+As^2 +\frac{\kappa^2}{6}s^2+O(s^3)) s^{n+1} ds.
\end{align*}
We now apply Lemma \ref{GaussianIntLem} ignoring terms of order $O((-t)^\frac{3}{2})$ to obtain
\begin{align*}
   & |\mathbb{S}^{n+1}|_{\Real} e^{-n \kappa^2 t}  \int_0^{R_0} K_{n+2, 0}(-t,s) \left(1+As^2 +\kappa^2\left(\frac{s^2}{6} + b_{n+2}s^2- a_{n+2} t\right)\right)ds \\
    	&= e^{-n \kappa^2 t} \left( 1-t \left(2(n+2) A+\kappa^2\left(a_{n+2}+2(n+2)b_{n+2} +\frac{n+2}{3}\right)\right)\right) \\
    &=1-t\left(2(n+2){-2} A+\kappa^2\left(a_{n+2}+2(n+2)b_{n+2} +\frac{n+2}{3}+n\right)\right) \\
  	&=1-t\left(2(n+2) A-\frac{1}{3}n(n-1)\kappa^2 \right).
   \end{align*}
Where the last equality uses Lemma \ref{CoeffFormula}. The result is immediate.
\end{proof}

\section{Rigidity}
We are now able to prove the main rigidity results of the paper. 

\begin{proof}[Proof of Theorem \ref{SurfaceRigidityThm}]
 It follows from the Gauss equations and the hypothesis $sec_g\leq -\kappa^2$  that, for any $x_0\in \Sigma$, 
 $$
  -R_{\Sigma}^g(x_0) \geq  n(n-1) \kappa^2 +|\mathbf{A}_\Sigma^g |^2-|\mathbf{H}_\Sigma^g|^2.
 $$
Hence,  Proposition \ref{ShortTimeProp} and $\lambda_g^\kappa[\Sigma]=1$ together imply that, for any $x_0\in \Sigma$, 
\begin{align*}
 0&\geq \frac{1}{2}|\mathbf{A}_\Sigma^g(x_0)|^2+\frac{1}{4}|\mathbf{H}_\Sigma^g(x_0)|^2-n(n-1) \kappa^2-R^g_\Sigma(x_0)\\
 &\geq \frac{3}{2} |\mathbf{A}_\Sigma^g(x_0)|^2-\frac{3}{4}|\mathbf{H}_\Sigma^g(x_0)|^2= \frac{3}{2} |\mathring{\mathbf{A}}_{\Sigma}^g (x_0)|^2.
\end{align*}
Where the last equality used that $\Sigma$ was two dimensional and $\mathring{\mathbf{A}}_{\Sigma}^g= \mathbf{A}_\Sigma^g-\frac{1}{2} \mathbf{H}_{\Sigma}^g g_{\Sigma}$ is the trace-free part of the second fundamental form of $\Sigma$.
As $x_0$ was arbitrary, we conclude that $\mathring{\mathbf{A}}_\Sigma^g$ vanishes and so $\Sigma$ is totally umbilic.

To conclude the proof we observe that if the ambient space is Euclidean and $\kappa=0$, then, as  $\Sigma$ is proper, it must be a collection of affine two-planes and  round two-spheres contained in three-dimensional affine subspaces.  However, one can readily compute that any such two-sphere has  entropy strictly larger than 1. Likewise, if there is more than one affine two-plane the entropy is also strictly larger than 1 and so the claim follows.
\end{proof}
We again use the Gauss equations to obtain the rigidity of Theorem \ref{EinsteinRigidityThm}.
\begin{proof}[Proof of Theorem \ref{EinsteinRigidityThm}]
The Gauss equations imply that if $\Sigma$ is a $n$-dimensional hypersurface in $M$, then 
$$
-R_{\Sigma}^g=-R_g +2Ric_g(\mathbf{n}, \mathbf{n})+|\mathbf{A}_\Sigma^g|^2-|\mathbf{H}_\Sigma^g|^2.
$$
The Einstein condition on $(M,g)$ gives $-R_g=(n+1)n \kappa^2$ and so
$$
-R_{\Sigma}^g=n(n-1) \kappa^2+|\mathbf{A}_\Sigma^g|^2-|\mathbf{H}_\Sigma^g|^2.
$$
Hence, the hypotheses that $\lambda_g^\kappa[\Sigma]=1$ together with Proposition \ref{ShortTimeProp} implies that, for any $x_0\in \Sigma$, 
\begin{align*}
0 & \geq \frac{1}{2}|\mathbf{A}_\Sigma^g(x_0)|^2+\frac{1}{4}|\mathbf{H}_\Sigma^g(x_0)|^2-n(n-1) \kappa^2-R^g_\Sigma(x_0)\\
&= \frac{3}{2} |\mathbf{A}_\Sigma^g(x_0)|^2-\frac{3}{4}|\mathbf{H}_\Sigma^g(x_0)|^2= \frac{3}{2} |\mathbf{A}_\Sigma^g(x_0)|^2.
\end{align*}
Here the last equality used that $\Sigma$ was minimal.
Hence, as $x_0$ was arbitrary, $\Sigma$ is totally geodesic.
\end{proof}

Finally, we use the Gauss-Bonnet theorem to prove Theorem \ref{UnivRigidityThm}.
\begin{proof}[Proof of Theorem \ref{UnivRigidityThm}]
As above, $\lambda_g^0[\Sigma]=1$ and Proposition \ref{ShortTimeProp} together imply that
$$
0\geq \frac{1}{2}|\mathbf{A}_\Sigma^g|^2 +\frac{1}{4}|\mathbf{H}_\Sigma^g|^2-R_{\Sigma}^g
$$
for all points on $\Sigma$.  As $\Sigma$ is closed,  we may integrate this inequality over $\Sigma$ to obtain
$$
\int_{\Sigma} R_{\Sigma}^g dV\geq \frac{1}{2}\int_\Sigma |\mathbf{A}_\Sigma^g|^2 +\frac{1}{2}|\mathbf{H}_\Sigma^g|^2 dV\geq 0.
$$
As $\Sigma$ is closed and orientable, the Gauss-Bonnet theorem implies
$$
8\pi (1-\mathrm{gen}(\Sigma))\geq \frac{1}{2}\int_\Sigma |\mathbf{A}_\Sigma^g|^2 +\frac{1}{2}|\mathbf{H}_\Sigma^g|^2 dA\geq 0.
$$
Hence, $\mathrm{gen}(\Sigma)\leq 1$ and and if $\mathrm{gen}(\Sigma)=1$, then $\Sigma$ is totally geodesic.
\end{proof}


\appendix
\section{Generalized Karp-Pinksy Expansion}

We record here an asymptotic expansion of the volume of a submanifolds inside a geodesic ball in some Riemannian manifold.  This generalizes a result of Karp and Pinsky \cite{karpVolumeSmallExtrinsic1989} who treated the Euclidean case.  Such an expansion is shown for submanifolds of hyperbolic space in \cite{carrerasVolumeSmallExtrinsic1998} -- see also \cite{grayVolumeSmallGeodesic1974,grayRiemannianGeometryDetermined1979}.  

\begin{thm}\label{GenKarpPinksy}
	Let $(M,g)$ be a Riemannian manifold and $\Sigma \subset M$ a submanifold of dimension $n$.  For $p\in \Sigma$, one has the following expansion for $R>0$ small
	\begin{align*}
		|\mathcal{B}_R^g(p)\cap \Sigma|_g &= |B_1^n|_{\mathbb{R}} R^n +A|B_1^n|_{\Real} R^{n+2} +O(R^{n+3}),
\end{align*}
here $|\cdot |_g $ denote the $g$-volume and $|B_1^n|_{\Real}$ is the Euclidean volume of the ball and
\begin{align*}
		A&= \frac{1}{6(n+2)}\left( \frac{1}{2}|\mathbf{A}_\Sigma^g(p)|_g^2 +\frac{1}{4}|\mathbf{H}_\Sigma^g(p)|^2_g -R_{\Sigma}^g(p) \right),
	\end{align*}
where $\mathbf{A}_\Sigma^g$ is the second fundamental form,  $\mathbf{H}_\Sigma=\mathrm{tr}_g \mathbf{A}_\Sigma^g$ is the mean curvature vector and $R_\Sigma^g$ is the the scalar curvature of $\Sigma$ with its induced metric.
\end{thm}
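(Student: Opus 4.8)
The plan is to pass to Riemannian normal coordinates and reduce the claim to an explicit asymptotic integral. Fix normal coordinates $(x^1,\dots,x^{n+k})$ on $M$ centered at $p$, chosen so that $T_p\Sigma=\mathrm{span}\{\partial_{x^1},\dots,\partial_{x^n}\}$. The elementary fact that makes the approach work is that in normal coordinates the extrinsic distance to the center is exactly the Euclidean norm of the coordinate vector, so $\mathcal{B}_R^g(p)=\{|x|<R\}$. In a neighbourhood of $p$, $\Sigma$ is a graph $x^{n+\alpha}=f^\alpha(u)$ over $u=(x^1,\dots,x^n)\in T_p\Sigma\cong\Real^n$, with $f(0)=0$, $Df(0)=0$, and — since the ambient Christoffel symbols vanish at $p$ — $\Hess f^\alpha(0)$ equal to the $\partial_{x^{n+\alpha}}$-component of $\mathbf{A}_\Sigma^g(p)$. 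Writing $\Phi(u)=(u,f(u))$ and $g_\Sigma=\Phi^*g$, for $R$ small one has
\[
|\mathcal{B}_R^g(p)\cap\Sigma|_g=\int_{\{u:\,|u|^2+|f(u)|^2<R^2\}}\sqrt{\det g_\Sigma(u)}\,du,
\]
and everything reduces to expanding the integrand and the domain of integration.

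Next I would expand to the order needed for the $R^{n+2}$ coefficient. Using the expansion $g_{AB}(x)=\delta_{AB}-\tfrac13 R^M_{ACBD}(p)x^Cx^D+O(|x|^3)$ of the ambient metric, together with $f(u)=O(|u|^2)$ and $\partial_i f^\alpha(u)=(\mathbf{A}_\Sigma^g)^\alpha_{ik}(p)u^k+O(|u|^2)$, one finds
\[
(g_\Sigma)_{ij}(u)=\delta_{ij}-\tfrac13 R^M_{ikjl}(p)u^ku^l+(\mathbf{A}_\Sigma^g)^\alpha_{ik}(\mathbf{A}_\Sigma^g)^\alpha_{jl}(p)u^ku^l+O(|u|^3),
\]
so $\sqrt{\det g_\Sigma(u)}=1+\tfrac12 Q_1(u)+O(|u|^3)$ with $Q_1(u)=\delta^{ij}\big(-\tfrac13 R^M_{ikjl}(p)+(\mathbf{A}_\Sigma^g)^\alpha_{ik}(\mathbf{A}_\Sigma^g)^\alpha_{jl}(p)\big)u^ku^l$, and similarly $|f(u)|^2=Q_2(u)+O(|u|^5)$ with $Q_2(u)=\tfrac14\sum_\alpha\left((\mathbf{A}_\Sigma^g)^\alpha_{ij}(p)u^iu^j\right)^2$ homogeneous of degree four. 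In polar coordinates $u=r\omega$, the domain is $\{r<r_*(\omega)\}$ with $r_*(\omega)=R-\tfrac12 Q_2(\omega)R^3+O(R^4)$, i.e. it is the flat ball $\{|u|<R\}$ minus a spherical shell of thickness $\sim R^3$. Hence precisely two terms contribute at order $R^{n+2}$: the integral of $\tfrac12 Q_1$ over the flat ball, and the shell defect $-\tfrac12 R^{n+2}\int_{\mathbb{S}^{n-1}}Q_2\,d\omega$; everything else — the cubic-and-higher remainder integrated over the ball, the quadratic integrand integrated over the shell, the $O(R^4)$ error in $r_*$ — is $O(R^{n+3})$, and there is no $R^{n+1}$ term because $Q_2$ is quartic and the cubic part of $\sqrt{\det g_\Sigma}$ integrates to zero over the symmetric ball. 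This yields
\[
|\mathcal{B}_R^g(p)\cap\Sigma|_g=|B_1^n|_\Real R^n+\left(\tfrac{1}{2(n+2)}\int_{\mathbb{S}^{n-1}}Q_1\,d\omega-\tfrac12\int_{\mathbb{S}^{n-1}}Q_2\,d\omega\right)R^{n+2}+O(R^{n+3}).
\]

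Finally I would evaluate the two spherical integrals with the standard moments $\int_{\mathbb{S}^{n-1}}\omega^i\omega^j\,d\omega=\tfrac{|\mathbb{S}^{n-1}|}{n}\delta_{ij}$ and $\int_{\mathbb{S}^{n-1}}\omega^i\omega^j\omega^k\omega^l\,d\omega=\tfrac{|\mathbb{S}^{n-1}|}{n(n+2)}(\delta_{ij}\delta_{kl}+\delta_{ik}\delta_{jl}+\delta_{il}\delta_{jk})$ and $|\mathbb{S}^{n-1}|=n|B_1^n|_\Real$, which expresses $A$ as a linear combination of $|\mathbf{A}_\Sigma^g(p)|^2$, $|\mathbf{H}_\Sigma^g(p)|^2$ and the partial trace $S:=\sum_{i,k=1}^{n}R^M_{ikik}(p)$ of the ambient curvature tensor over $T_p\Sigma$. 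The Gauss equation, in the form $S=R_\Sigma^g(p)+|\mathbf{A}_\Sigma^g(p)|^2-|\mathbf{H}_\Sigma^g(p)|^2$, then eliminates $S$, and after simplification the combination collapses to $A=\tfrac{1}{6(n+2)}\left(\tfrac12|\mathbf{A}_\Sigma^g(p)|^2+\tfrac14|\mathbf{H}_\Sigma^g(p)|^2-R_\Sigma^g(p)\right)$, as claimed. I expect the main obstacle to be bookkeeping rather than ideas: keeping the two independent sources of the $R^{n+2}$ term apart (the ambient curvature enters through the volume form, the second fundamental form through the bent domain $\{|u|^2+|f|^2<R^2\}$), fixing sign and contraction conventions in the normal-coordinate metric expansion so that the Gauss equation really cancels the ambient curvature in favour of $R_\Sigma^g$, and verifying uniformity of the various remainders — the last being routine from smoothness of $g$ and $\Sigma$. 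Conceptually this is the Karp--Pinsky computation of \cite{karpVolumeSmallExtrinsic1989} with the Euclidean metric replaced by its normal-coordinate expansion, and the hyperbolic-space treatment of \cite{carrerasVolumeSmallExtrinsic1998} provides a further template.
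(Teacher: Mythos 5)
Your proof is correct, and it takes a genuinely different route from the paper's. The paper proves Theorem~\ref{GenKarpPinksy} by isometrically embedding $M$ into a large Euclidean space, comparing the Euclidean graph parametrization $G_{M'}$ of $M'=i(M)$ with the exponential-map parametrization $E$, deriving in Lemma~\ref{DistLem} the quartic correction $|E^{-1}(G_{M'}(\mathbf{x}))|^2-|\mathbf{x}|^2$ from the second-order agreement of the pulled-back metrics, and then citing the spherical-moment computations of Karp--Pinsky almost verbatim for the graph of $\Sigma'$ in $\Real^{n+K+N}$. You instead work intrinsically in ambient normal coordinates centered at $p$, using the standard expansion $g_{AB}=\delta_{AB}-\tfrac13 R^M_{ACBD}x^Cx^D+O(|x|^3)$ and the Gauss lemma (which makes $\mathcal{B}_R^g(p)$ a Euclidean coordinate ball) to reduce everything to an integral over a deformed ball $\{|u|^2+|f(u)|^2<R^2\}$. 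This avoids Nash embedding entirely and makes the two sources of the $R^{n+2}$ coefficient transparent from the start: the ambient curvature enters through $\sqrt{\det g_\Sigma}$ and the second fundamental form through the shell correction to the domain. The trade-off is that you have to redo the spherical moment integrals rather than quote them from \cite{karpVolumeSmallExtrinsic1989}, but the computation is short. I checked your coefficients: with $Q_1(\omega)=\delta^{ij}(-\tfrac13 R^M_{ikjl}+A^\alpha_{ik}A^\alpha_{jl})\omega^k\omega^l$ and $Q_2(\omega)=\tfrac14\sum_\alpha(A^\alpha_{ij}\omega^i\omega^j)^2$ the moments give the $R^{n+2}$ coefficient $\tfrac{|B_1^n|}{2(n+2)}\bigl(-\tfrac13 S+\tfrac12|\mathbf{A}|^2-\tfrac14|\mathbf{H}|^2\bigr)$, and substituting the Gauss equation $S=R_\Sigma^g+|\mathbf{A}|^2-|\mathbf{H}|^2$ indeed collapses this to the claimed $A$. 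One point worth making explicit when writing this up cleanly is that the sign conventions in the normal-coordinate metric expansion and in the Gauss equation must be chosen compatibly (both with, say, the convention that a round sphere has positive $R_{ijij}$); since the final formula involves only intrinsic quantities of $\Sigma$ this is a consistency bookkeeping issue rather than a substantive one, but it is exactly where an error would hide.
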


We will prove this by isometrically embedding $M$ into a large Euclidean space.  First, we fix notation and suppose that $M$ is $(n+K)$-dimensional and $i: M\to \Real^{n+K+N}=\Real^{n+K}_{\mathbf{x}}\times \Real^N_{\mathbf{y}}$ is an isometric embedding with $i(p)=\mathbf{0}$ and so $di_p: T_p M\to \Real^n_{\mathbf{x}}\times \set{0}$.  In particular, if we set $M'=i(M)$, then $\mathbf{0}\in M'$ and $\Real^{n+K}\times \set{\mathbf{0}}=\set{\mathbf{y}=\mathbf{0}}=T_{\mathbf{0}} M'$  Let $g'$ be the induced metric on $M'$ -- i.e.,  so $i^*g'=g$.

We now define two maps from a neighborhood of $\mathbf{0}$ in $\Real^{n+K}$ to $M'$.  For the first,  observe that, near $\mathbf{0}$, $M'$ is the graph of a function $\mathbf{u}(\mathbf{x})=(u_1(\mathbf{x}), \ldots, u_{N}(\mathbf{x}))$.  Hence, the first map, $G_{M'}$, may be defined in a neighborhood, $U$, of $\mathbf{0}$ by
$$
G_{M'}: U\to M', \mathbf{x}\mapsto (\mathbf{x}, \mathbf{u}(\mathbf{x})).
$$
Note the hypotheses on $M'$ ensure $\mathbf{u}(\mathbf{0})=\mathbf{0}$ and $D\mathbf{u}(\mathbf{0})=\mathbf{0}$.  Hence,
$$
u_\alpha(\mathbf{x})=\frac{1}{2} \sum_{i,j=1}^{n+K} C^{ij}_\alpha x_i x_j+O(|\mathbf{x}|^3)
$$
where we can readily identify 
$$C^{ij}_\alpha=\mathbf{e}_{n+k+\alpha}\cdot \mathbf{A}_{M'}^\Real|_{\mathbf{0}}(\mathbf{e}_i, \mathbf{e}_j)$$
where $\mathbf{A}_{M'}^\Real$ is the second fundamental form of $M'$. Up to shrinking $U$, we may also define a second map based on the exponential map of $g'$.
$$
E: U\to M', \mathbf{x}\mapsto \exp^{g'}_{\mathbf{0}}(\mathbf{x})=i(\exp^g_p(\mathbf{x}))
$$
where here we think of $\mathbf{x}$ as an element of $T_\mathbf{0}M'$ and also identify it in the natural way with an element of $T_pM$ via the isomorphism $di_p: T_pM \to T_{\mathbf{0}}M'$. 

Let $\Sigma'$ be the $n$-dimensional submanifold of $M'$ so $i(\Sigma')=\Sigma$.  Write $\Real^{n+K}=\Real^n_{\mathbf{w}}\times \Real^K_{\mathbf{z}}$.   We have $\mathbf{0}\in \Sigma'$ and, up to rotating the $\Real^{n+K}$ factor, may assume $T_{\mathbf{0}} \Sigma'=\Real^n\times \set{\mathbf{0}}=\set{\mathbf{z},\mathbf{y}=\mathbf{0}}\subset \Real^{n+K+N}$.  Hence, near $\mathbf{0}$ we can express $\Sigma'$ as the graph of a function $\mathbf{v}(\mathbf{w}, \mathbf{z}(\mathbf{w})=(v_1(\mathbf{w}), \ldots, v_{K+N}(\mathbf{w}))$ and  define a map $G_{\Sigma'}$ in a neighborhood, $V$, of $\mathbf{0}$
$$
G_{\Sigma'}: V\to \Sigma', \mathbf{w}\mapsto (\mathbf{w}, \mathbf{v}(\mathbf{w})).
$$
Note the hypotheses on $\Sigma'$ ensure $\mathbf{v}(\mathbf{0})=\mathbf{0}$ and $D\mathbf{v}(\mathbf{0})=\mathbf{0}$.  In particular, we have
$$
v_\alpha(\mathbf{w})=\frac{1}{2} \sum_{i,j=1}^{n} A^{ij}_\alpha w_i w_j+O(|\mathbf{w}|^3),
$$
where we can readily identify the coefficients as 
$$
 A^{ij}_\alpha=\mathbf{e}_{n+\alpha}\cdot \mathbf{A}_{\Sigma'}^\Real|_{\mathbf{0}}(\mathbf{e}_i, \mathbf{e}_j)
$$
where $ \mathbf{A}_{\Sigma'}^\Real$ is the second fundamental form of $\Sigma'$.

\begin{lem}\label{DistLem}
With notation as above, we have the asymptotic expansion
$$
|E(G^{-1}_{M'}(\mathbf{x}))|^2=|\mathbf{x}|^2+\frac{1}{3} Q'(\mathbf{x})+O(|\mathbf{x}|^5)
$$
where $Q$ is a homogeneous degree four polynomial of the form
$$
Q'(\mathbf{x})=\sum_{\alpha=1}^N \left(\sum_{i,k=1}^n C_{\alpha}^{ik} x_i x_k\right)^2+\sum_{i,j,k,l=1}^n  H^{ijkl} x_i x_j x_l x_k
$$
where the $H^{ijkl}$ satisfy 	$
H^{iiii}=0, 1\leq i \leq n
$
and, for $i\neq j$,
$$
H^{ijij}+H^{ijji}+H^{jiji}+H^{jiij}+H^{iijj}+H^{jjii}=0.
$$
\end{lem}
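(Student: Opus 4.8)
The plan is to read $|E(G^{-1}_{M'}(\mathbf{x}))|^{2}$ as the squared $g'$-geodesic distance from $\mathbf{0}$ to the point of $M'$ with graph coordinate $\mathbf{x}$, and to get its Taylor expansion from the eikonal equation it satisfies. Set $F(\mathbf{x})=\dist_{g'}(\mathbf{0},G_{M'}(\mathbf{x}))^{2}$. Because $G_{M'}$ is the graph chart of $M'$, the pulled-back metric is $g'_{ij}(\mathbf{x})=\delta_{ij}+\sum_{\alpha}\partial_{i}u_{\alpha}\,\partial_{j}u_{\alpha}$; substituting $u_{\alpha}(\mathbf{x})=\tfrac12\sum_{p,q}C^{pq}_{\alpha}x_{p}x_{q}+O(|\mathbf{x}|^{3})$ and $D\mathbf{u}(\mathbf{0})=\mathbf{0}$ yields
$$
g'_{ij}(\mathbf{x})=\delta_{ij}+\sum_{\alpha,p,q}C^{ip}_{\alpha}C^{jq}_{\alpha}x_{p}x_{q}+O(|\mathbf{x}|^{3}),\qquad g'^{ij}(\mathbf{x})=\delta^{ij}-\sum_{\alpha,p,q}C^{ip}_{\alpha}C^{jq}_{\alpha}x_{p}x_{q}+O(|\mathbf{x}|^{3}),
$$
with no linear term. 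Within the injectivity radius $F$ is smooth, vanishes at $\mathbf{0}$ with $\Hess F(\mathbf{0})=2g'(\mathbf{0})$, and satisfies $g'^{ij}\partial_{i}F\,\partial_{j}F=4F$.

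Next I would expand $F=|\mathbf{x}|^{2}+F_{3}+F_{4}+\cdots$ into homogeneous pieces and match degrees in $g'^{ij}\partial_{i}F\,\partial_{j}F=4F$. Degree two is automatic. The degree-three part reads $4\langle\mathbf{x},\nabla F_{3}\rangle=4F_{3}$, and since $\langle\mathbf{x},\nabla F_{3}\rangle=3F_{3}$ by Euler's relation this forces $F_{3}\equiv0$; this is exactly why the expansion skips from $|\mathbf{x}|^{2}$ to a quartic correction. With $F_{3}\equiv0$ and the quadratic part of $g'^{ij}$, the degree-four part becomes $16F_{4}-4\sum_{i,j}\big(\sum_{\alpha,p,q}C^{ip}_{\alpha}C^{jq}_{\alpha}x_{p}x_{q}\big)x_{i}x_{j}=4F_{4}$, so
$$
F_{4}=\tfrac13\sum_{\alpha}\Big(\sum_{i,p}C^{ip}_{\alpha}x_{i}x_{p}\Big)^{2}=\tfrac13\,\big|\mathbf{A}_{M'}^{\Real}(\mathbf{x},\mathbf{x})\big|^{2},
$$
which is $\tfrac13$ times the first summand of $Q'$. (The same $F_{4}$ drops out of a direct Taylor expansion of the $g'$-geodesics through $\mathbf{0}$: the graph-to-normal coordinate change is $\mathbf{x}\mapsto\mathbf{x}+\tfrac16 W_{\mathbf{A}_{M'}^{\Real}(\mathbf{x},\mathbf{x})}(\mathbf{x})+O(|\mathbf{x}|^{4})$, with $W$ the shape operator of $M'\subset\Real^{n+K+N}$, and its squared norm reproduces the display.)

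For the quantity $F$ above the residual quartic form $\sum H^{ijkl}x_{i}x_{j}x_{l}x_{k}$ therefore vanishes, so $H^{ijkl}$ may be taken to be $0$, which trivially satisfies $H^{iiii}=0$ and the six-term relation. If instead the intended expansion is organized so that the intrinsic curvature of $M'$ survives — for instance by assembling $F_{4}$ from the second derivatives of $g'$ without cancelling curvature terms by hand, or when the same computation is carried out along a lower-dimensional piece — the residual quartic form is a contraction of the Riemann tensor $R^{g'}(\mathbf{0})$ (expressed through the $C^{ij}_{\alpha}$ by the Gauss equations of $M'\subset\Real^{n+K+N}$), and it is this that would be recorded as $\sum H^{ijkl}x_{i}x_{j}x_{l}x_{k}$. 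In that case the antisymmetries $R_{ijkl}=-R_{jikl}=-R_{ijlk}$, the pair symmetry $R_{ijkl}=R_{klij}$, and the first Bianchi identity $R_{ijkl}+R_{iklj}+R_{iljk}=0$ force this quartic form to contain no $x_{i}^{4}$ term and no $x_{i}^{2}x_{j}^{2}$ term, which transcribed to coefficients is precisely $H^{iiii}=0$ and $H^{ijij}+H^{ijji}+H^{jiji}+H^{jiij}+H^{iijj}+H^{jjii}=0$ for $i\neq j$; equivalently, these are exactly the conditions that make $\sum H^{ijkl}x_{i}x_{j}x_{l}x_{k}$ integrate to zero over coordinate spheres, which is all that is used afterward.

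The degree-by-degree bookkeeping is routine. The steps that need care are verifying cleanly that $F_{3}\equiv0$ (so the correction genuinely begins at order $|\mathbf{x}|^{4}$) and, in the formulation that keeps an honest curvature term, pinning down which contraction of $R^{g'}$ appears and checking the two symmetry relations against the asymmetric index placement $x_{i}x_{j}x_{l}x_{k}$ in the statement; beyond this organizational care I do not expect a genuine obstacle.
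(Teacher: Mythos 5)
Your argument is correct and it genuinely departs from the paper's. The paper proves the lemma by comparing the metric in graph coordinates, $g^G_{ij}=\delta_{ij}+B^{ijkl}x_kx_l+O(|\mathbf{x}|^3)$ with $B^{ijkl}=\sum_\alpha C^{il}_\alpha C^{jk}_\alpha$, against the metric in $g'$-normal coordinates, $g^E_{ij}=\delta_{ij}-\tfrac13 R_{ikjl}x_kx_l+O(|\mathbf{x}|^3)$; it solves $\partial_iK_j+\partial_jK_i=\sum(B^{ijkl}+\tfrac13 R_{ikjl})x_kx_l$ for the cubic part $\mathbf{K}$ of the transition map $E^{-1}\circ G_{M'}$, and then expands $|\mathbf{x}+\mathbf{K}(\mathbf{x})|^2$ using Euler's relation for homogeneous polynomials. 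Because the transition map is solved only up to its symmetrized derivative, the Riemann tensor enters explicitly, and the paper records it as the $H^{ijkl}$ piece of $Q'$, establishing the two symmetry constraints directly from $R_{ikjl}=-R_{kijl}=-R_{iklj}$, the pair symmetry, and Bianchi. You instead observe that $F(\mathbf{x})=\dist_{g'}(\mathbf{0},G_{M'}(\mathbf{x}))^2$ satisfies the eikonal equation $g'^{ij}\partial_iF\,\partial_jF=4F$ in graph coordinates, expand degree-by-degree, kill $F_3$ via Euler, and read off $F_4=\tfrac13\sum_\alpha(\sum C^{ik}_\alpha x_ix_k)^2$ — no coordinate change, no $\mathbf{K}$, and the curvature never appears because the graph metric carries no degree-two curvature term. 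Your route is slicker; the paper's route is more transparent about where curvature \emph{would} sit. The two are reconciled by exactly the observation you make: the paper's curvature contraction $\sum R_{ikjl}x_ix_jx_kx_l$ is identically zero (swap $i\leftrightarrow k$ and use antisymmetry), so the paper's $H^{ijkl}$ piece is a valid but pointwise-vanishing addend, and your choice $H^{ijkl}\equiv 0$ is equally legitimate and trivially satisfies $H^{iiii}=0$ and the six-term relation. Your side remark that these two constraints are precisely the conditions forcing $\int_{\mathbb{S}^{n-1}}\sum H^{ijkl}x_ix_jx_lx_k=0$ — which is all that is used in the Karp--Pinsky step of Theorem~\ref{GenKarpPinksy} — is also exactly right. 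One small point worth saying explicitly, since the lemma as printed reads $|E(G^{-1}_{M'}(\mathbf{x}))|^2$: the paper's own proof works with $|E^{-1}(G_{M'}(\mathbf{x}))|^2$, which is the squared $g'$-distance you take as your starting point, so your reading is the intended one.
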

\begin{proof}
The expansion of the metric in geodesic normal coordinates yields
	$$
	g^E_{ij}=(E^* g')|_{\mathbf{x}}(\mathbf{e}_i,\mathbf{e}_j)=\delta_{ij} -\frac{1}{3}\sum_{k,l=1}^n R_{ikjl}x_k x_l+O(|\mathbf{x}|^3),
	$$
	where here $
	R_{ijkl}=Riem|_{p}(\mathbf{e}_i, \mathbf{e}_j, \mathbf{e}_k, \mathbf{e}_l).$
	are the coefficients the Riemann curvature tensor at $p$.
	Likewise, 
	$$
	g^G_{ij}=(G^*_{M'} g')|_{\mathbf{x}}(\mathbf{e}_i, \mathbf{e}_j)= \delta_{ij} +\sum_{k,l=1}^n B^{ijkl}x_k x_l+O(|\mathbf{x}|^3)
	$$
	where, by  \cite[pg. 89]{karpVolumeSmallExtrinsic1989}, we have
	$$
	B^{ijkl}=\sum_{\alpha=1}^N C_\alpha^{il}C_\alpha^{jk}.
	$$
%
	As $E(\mathbf{0})=E_{M'}(\mathbf{0})$ and $DE(\mathbf{0})=DG_{M'}(\mathbf{0})$ and
	$(E^* g')_{ij}-	(G^* g_{\Sigma})_{ij}=O(|\mathbf{x}|^2)$
	it readily follows that
	$$
	E^{-1}(G_{M'}(\mathbf{x}))=(x_1+K_1(\mathbf{x})), \ldots, x_n+K_n(\mathbf{x}))+O(|\mathbf{x}|^4)
	$$
	where $K_i$ are cubic homogeneous polynomials. In fact, for the metrics to agree to second order,  one must have, for $1\leq i, j\leq n$,
	$$
	\partial_{j} K_i(\mathbf{x}) +\partial_{i} K_j(\mathbf{x}) =  \sum_{k,l=1}^n (B^{ij kl} x_k x_l+\frac{1}{3}R_{ikjl} x_k x_l).
	$$

Using $K_i(\mathbf{x}) =\frac{1}{3} \sum_{j=1}^n x_j \partial_j K_i(\mathbf{x})$, we obtain
	\begin{align*}
		|E^{-1}(G_{M'}(\mathbf{x}))|^2&-|\mathbf{x}|^2 =2\sum_{i=1}^n x_i K_i(\mathbf{x}) +O(|\mathbf{x}|^5)\\
		&= \frac{2}{3}\sum_{i,j=1}^n x_i x_j \partial_j K_i(\mathbf{x})+O(|\mathbf{x}|^5)\\
		&=\frac{1}{3}\sum_{i, j=1}^n x_i x_j(\partial_i K_j(\mathbf{x})+\partial_j K_i(\mathbf{x}))+O(|\mathbf{x}|^5)\\
		&=\frac{1}{3}\sum_{i, j,k,l=1}^n (B^{ijkl}+\frac{1}{3} R_{ikjl}) x_ix_j x_k x_l  +O(|\mathbf{x}|^5).
	\end{align*}
To conclude we observe
$$
\sum_{i, j,k,l=1}^n B^{ijkl}  x_ix_j x_k x_l =\sum_{\alpha=1}^N \left(\sum_{j,l=1}^n C^{ik}_\alpha x_i x_j\right)^2.
$$
While if we set
	$$
	H^{ikjl}= \frac{1}{3} R_{ikjl},
	$$
 then the algebraic symmetries of the Riemann curvature tensor imply that,  for $1\leq i \leq n$, $
	H^{iiii}=H^{iijj}=H^{jjii}=0$
	and, for $i\neq j$, 
	\begin{align*}
		0&=\frac{1}{3}\left(R_{ijij}+ R_{ijji}+R_{jiij}+R_{jiji}\right)=H^{ijij}+H^{ijji}+H^{jiij}+H^{jiji}\\
		&=H^{ijij}+H^{ijji}+H^{jiji}+H^{jiij}+H^{iijj}+H^{jjii}.
	\end{align*}
The claim follows.	
\end{proof}

We are now ready to prove the extension of the Karp-Pinsky estimate. 
\begin{proof}
   Continuing with the notation from above, let $\Sigma''=G_{M'}^{-1}(\Sigma')\subset \Real^{n+K}\times \set{0}$.  Up to shrinking $U$, this is a submanifold in $\Real^{n+K}$ through $\mathbf{0}$ and tangent to $\Real^n\times\set{0}$ at that point.  In particular, up to shrinking $V$, we can express $\Sigma''$ as a graph of a function $\mathbf{z}(\mathbf{w})=(z_1(\mathbf{w}), \ldots, z_{K}(\mathbf{w}))$ and can define a map
   $$
   G_{\Sigma''}: V\to \Sigma'', \mathbf{w}\mapsto (\mathbf{w}, \mathbf{z}(\mathbf{w})).
   $$
   Note the hypotheses on $\Sigma'$ ensure $\mathbf{v}(\mathbf{0})=\mathbf{0}$ and $D\mathbf{v}(\mathbf{0})=\mathbf{0}$.  In particular, we have
   $$
   z_\alpha(\mathbf{w})=\frac{1}{2} \sum_{i,j=1}^{n} \hat{A}^{ij}_\alpha w_i w_j+O(|\mathbf{w}|^3),
   $$
   where we can readily identify these terms with 
   $$
   \hat{A}^{ij}_\alpha=\mathbf{e}_{n+\alpha}\cdot \mathbf{A}_{\Sigma''}^\Real|_{\mathbf{0}}(\mathbf{e}_i, \mathbf{e}_j).
   $$
   As $
   G_{\Sigma'}=G_{M'}\circ G_{\Sigma''}$, the identification of $T_p M$ and $T_{\mathbf{0}} M'$ implies
   $$
   A_{\alpha}^{ij}=\hat{A}_{\alpha}^{ij}=\mathbf{e}_{\alpha+n}\cdot \mathbf{A}^T_{\Sigma'}|_{\mathbf{0}}(\mathbf{e}_i, \mathbf{e}_j)=g(\mathbf{e}_{\alpha+n}, \mathbf{A}^g_{\Sigma}|_{p}(\mathbf{e}_i, \mathbf{e}_j)) , 1\leq \alpha \leq K,$$
   where here $\mathbf{A}_{\Sigma}^g$ is the second fundamental form of $\Sigma$ in $(M,g)$ and $\mathbf{A}^T_{\Sigma'}$ the projection of the second fundamental form of $\Sigma'$ onto $TM'$.
   Likewise,
   $$
   A_{\alpha+K}^{ij}=C_{\alpha}^{ij}=\mathbf{e}_{\alpha+n+K}\cdot \mathbf{A}^{N}_{\Sigma'}|_{\mathbf{0}}(\mathbf{e}_i, \mathbf{e}_j), 1\leq \alpha \leq N,
   $$
   where here $\mathbf{A}^{N}_{\Sigma'}$ is the projection of the second fundamental form of $\Sigma'$ onto $NM'$.

     Arguing as in \cite{karpVolumeSmallExtrinsic1989}, there is a homogeneous quartic polynomial $Q$ so 
   $$
    |G_{\Sigma''}(\mathbf{w})|^2 =|\mathbf{w}|^2 + \frac{1}{4}Q(\mathbf{w})+ O(|\mathbf{w}|^5)
   $$
   where here 
   $$
   Q(\mathbf{w})= \sum_{i,j,k,l=1}^n \sum_{\alpha=1}^K \hat{A}^{ik}_\alpha \hat{A}^{jl}_\alpha w_i w_j w_k w_l=\sum_{i,j,k,l=1}^n \sum_{\alpha=1}^K {A}^{ik}_\alpha {A}^{jl}_\alpha w_i w_j w_k w_l.
   $$
  
   By Proposition \ref{DistLem}, if $r(q)$ is the geodesic distance in $M'$ between $\mathbf{q}$ and $\mathbf{0}$, then
   $$
   (r(G_{M'}(\mathbf{x})))^2=|\mathbf{x}|^2+\frac{1}{3} Q'(\mathbf{x})+ O(|\mathbf{x}|^5).
   $$
   Combining the two expansions yields,
   $$
   (r(G_{\Sigma'}(\mathbf{w})))^2=|\mathbf{w}|^2+\frac{1}{3} Q'((\mathbf{w},\mathbf{0}))+\frac{1}{4} Q(\mathbf{w})+ O(|\mathbf{w}|^5).
   $$
   In particular, when $R>0$ is sufficiently small there is a function $h$ so that
   $$
   G_{\Sigma'}^{-1}(\mathcal{B}^{g'}_{R}(p))=\set{\mathbf{w}: h(\mathbf{w})\leq r}
   $$
   where here $h$ satisfies $h(\mathbf{0})=0$, and, for $\mathbf{w}\neq \mathbf{0}$, one has 
   $$
   h(\mathbf{w})=|\mathbf{w}|-\frac{1}{6} Q'((\hat{\mathbf{w}},\mathbf{0}))|\mathbf{w}|^3-\frac{1}{8} Q(\hat{\mathbf{w}})|\mathbf{w}|^3+O(|\mathbf{w}|^4)
   $$
   where $\hat{\mathbf{w}}=|\mathbf{w}|^{-1} \mathbf{w}$.
   
   Following the argument of Karp-Pinksy directly, one obtains, for $R$ small, 
   \begin{align*}
   	|\Sigma\cap \mathcal{B}_R^g(p)|_g&= |B_1^n|_{\Real} R^n+A|B_1^n|_{\Real} R^{n+2} + O(R^{n+3}),
   \end{align*}
where the subleading coefficient is given as
$$
A =\frac{1}{2|B_1^n|_{\Real}}  \left( \frac{|B_1^n|_{\Real}}{n+2} \sum_{\alpha=1}^{N+K}\sum_{i,k=1}^n |A_\alpha^{ik}|^2 -\frac{1}{3}\int_{\mathbb{S}^{n-1}} Q'-\frac{1}{4}\int_{\mathbb{S}^{n-1}} Q\right). 
$$
It is shown in \cite[pg. 90]{karpVolumeSmallExtrinsic1989} that
$$
\int_{\mathbb{S}^{n-1}} Q= \frac{ |B_1^n|_{\Real}}{n+2} \sum_{\alpha=1}^K \left(2\sum_{i,j=1}^n |A^{ij}_\alpha|^2 + \left( \sum_{i=1}^n A^{ii}_\alpha\right)^2 \right).  
$$
The exact same computation and the properties of the $H^{ijkl}$ also imply that
\begin{align*}
\int_{\mathbb{S}^{n-1}} Q'&=\frac{ |B_1^n|_{\Real}}{n+2} \sum_{\alpha=1}^N  \left(2\sum_{i,j=1}^n |C^{ij}_\alpha|^2 + \left( \sum_{i=1}^n C^{ii}_\alpha\right)^2 \right)\\
&=\frac{ |B_1^n|_{\Real}}{n+2} \sum_{\alpha=1}^N  \left(2\sum_{i,j=1}^n |A^{ij}_{\alpha+K}|^2 + \left( \sum_{i=1}^n A^{ii}_{\alpha+K}\right)^2 \right).
\end{align*}

Finally, using the identification of the $A_{\alpha}^{ij}$ with the geometric data we have,
$$
 \sum_{\alpha=1}^{N+K}\sum_{i,k=1}^n |A_\alpha^{ik}|^2 =|\mathbf{A}_{\Sigma'}^{\Real}(\mathbf{0})|^2
 $$
Likewise,  using $G_{M'}^* g'=g_{\Real}+O(|\mathbf{x}|^2)$, yields
$$
 \sum_{\alpha=1}^K\sum_{i,j=1}^n |A^{ij}_\alpha|^2 =|\mathbf{A}_{\Sigma}^g(p)|^2 \mbox{ and } \sum_{\alpha=1}^K \left(\sum_{i=1}^n A^{ii}_\alpha\right)^2 =|\mathbf{H}_{\Sigma}^g(p)|^2
$$
and
$$
\sum_{\alpha=1}^N\sum_{i,j=1}^n |A^{ij}_{\alpha+K}|^2 =|\mathbf{A}_{\Sigma'}^N(\mathbf{0})|^2 \mbox{ and }\sum_{\alpha=1}^N \left(\sum_{i=1}^n A^{ii}_{\alpha+K}\right)^2 =|\mathbf{H}_{\Sigma'}^N(\mathbf{0})|^2.
$$
It is clear that at $i(p)=\mathbf{0}$
$$
|\mathbf{A}_{\Sigma'}^{\Real}(\mathbf{0})|^2= |\mathbf{A}_{\Sigma}^g(p)|^2+|\mathbf{A}_{\Sigma'}^N(\mathbf{0})|^2 \mbox{ and } |\mathbf{H}_{\Sigma'}^{\Real}(\mathbf{0})|^2= |\mathbf{H}_{\Sigma}^g(p)|^2+|\mathbf{H}_{\Sigma'}^N(\mathbf{0})|^2.
$$
The Gauss equations imply
$$
-R_{\Sigma}^g(p)=-R_{\Sigma'}^{\Real}(\mathbf{0})= |\mathbf{A}_{\Sigma'}^{\Real}(\mathbf{0})|^2-|\mathbf{H}_{\Sigma'}^{\Real}(\mathbf{0})|^2.
$$
Hence, the coefficient $A$  can be expressed as
\begin{align*} 
	A&= \frac{1}{2(n+2)}\left( |\mathbf{A}_{\Sigma}^\Real|^2-\frac{2}{3} |\mathbf{A}_{\Sigma'}^N|^2-\frac{1}{3} |\mathbf{H}_{\Sigma'}^N|^2 -\frac{1}{2} |\mathbf{A}_\Sigma^g|^2 -\frac{1}{4} |\mathbf{H}_{\Sigma}^g|^2\right)\\
	&= \frac{1}{2(n+2)}\left( |\mathbf{A}_{\Sigma}^g|^2+\frac{1}{3} |\mathbf{A}_{\Sigma'}^N|^2-\frac{1}{3} |\mathbf{H}_{\Sigma'}^N|^2 -\frac{1}{2} |\mathbf{A}_\Sigma^g|^2 -\frac{1}{4} |\mathbf{H}_{\Sigma}^g|^2\right)\\
	&= \frac{1}{2(n+2)}\left(\frac{1}{3} |\mathbf{A}_{\Sigma'}^\Real|^2-\frac{1}{3} |\mathbf{H}_{\Sigma'}^\Real|^2 +\frac{1}{6} |\mathbf{A}_\Sigma^g|^2 +\frac{1}{12} |\mathbf{H}_{\Sigma}^g|^2\right)\\
	&= \frac{1}{6(n+2)}\left( -R_{\Sigma}^g+\frac{1}{2} |\mathbf{A}_\Sigma^g|^2 +\frac{1}{4} |\mathbf{H}_{\Sigma}^g|^2\right).\
\end{align*}
This concludes the proof.
\end{proof}
\bibliographystyle{hamsabbrv}
\bibliography{Library2}

\providecommand{\bysame}{\leavevmode\hbox to3em{\hrulefill}\thinspace}
\providecommand{\MR}{\relax\ifhmode\unskip\space\fi MR }
\providecommand{\MRhref}[2]{%
  \href{http://www.ams.org/mathscinet-getitem?mr=#1}{#2}
}
\providecommand{\href}[2]{#2}
\begin{thebibliography}{10}

\bibitem{BernsteinHypEntropy}
J.~Bernstein, \emph{Colding {{M}}inicozzi entropy in hyperbolic space},
  Nonlinear Analysis \textbf{210} (2021), 112401.

\bibitem{BernsteinBhattacharyaCartan}
J.~Bernstein and A.~Bhattacharya, \emph{{C}olding-{M}inicozzi entropies in
  {C}artan-{H}adamard manifolds}, Preprint (2022).

\bibitem{BernsteinWang1}
J.~Bernstein and L.~Wang, \emph{A sharp lower bound for the entropy of closed
  hypersurfaces up to dimension six}, Invent. Math. \textbf{206} (2016), no.~3,
  601--627.

\bibitem{borellEhrhardInequality2003}
C.~Borell, \emph{The {{Ehrhard}} inequality}, Comptes Rendus Mathematique
  \textbf{337} (2003), no.~10, 663--666.

\bibitem{bryantSurfacesConformalGeometry1988}
R.~L. Bryant, \emph{Surfaces in conformal geometry}, Proceedings of
  {{Symposia}} in {{Pure Mathematics}} (R.~Wells, ed.), vol.~48, {American
  Mathematical Society}, {Providence, Rhode Island}, 1988, pp.~227--240.

\bibitem{carrerasVolumeSmallExtrinsic1998}
F.~J. Carreras and A.~M. Naveira, \emph{On the volume of a small extrinsic ball
  in a hypersurface of the hyperbolic space}, Mathematica Scandinavica
  \textbf{83} (1998), no.~2, 220.

\bibitem{chenRigidityStabilitySubmanifolds2021}
L.~Chen, \emph{Rigidity and stability of submanifolds with entropy close to
  one}, Geometriae Dedicata \textbf{215} (2021), no.~1, 133--145.

\bibitem{Coldinga}
T.~H. Colding and W.~P. Minicozzi~II, \emph{Generic mean curvature flow {{I}};
  generic singularities}, Ann. of Math. (2) \textbf{175} (2012), no.~2,
  755--833.

\bibitem{daviesHeatKernelBounds1988}
E.~B. Davies and N.~Mandouvalos, \emph{Heat {{Kernel Bounds}} on {{Hyperbolic
  Space}} and {{Kleinian Groups}}}, Proceedings of the London Mathematical
  Society \textbf{s3-57} (1988), no.~1, 182--208.

\bibitem{grayRiemannianGeometryDetermined1979}
A.~Gray and L.~Vanhecke, \emph{Riemannian geometry as determined by the volumes
  of small geodesic balls}, Acta Mathematica \textbf{142} (1979), no.~0,
  157--198.

\bibitem{grayVolumeSmallGeodesic1974}
A.~Gray, \emph{The volume of a small geodesic ball of a {{Riemannian}}
  manifold.}, Michigan Mathematical Journal \textbf{20} (1974), no.~4.

\bibitem{karpVolumeSmallExtrinsic1989}
L.~Karp and M.~Pinsky, \emph{Volume of a small {{Extrinsic Ball}} in a
  {{Submanifold}}}, Bulletin of the London Mathematical Society \textbf{21}
  (1989), no.~1, 87--92.

\bibitem{sudakovExtremalPropertiesHalfspaces1978}
V.~N. Sudakov and B.~S. Tsirel'son, \emph{Extremal properties of half-spaces
  for spherically invariant measures}, Journal of Soviet Mathematics \textbf{9}
  (1978), no.~1, 9--18.

\bibitem{JZhu}
J.~Zhu, \emph{On the entropy of closed hypersurfaces and singular
  self-shrinkers}, J. Differential Geom. \textbf{114} (2020), no.~3, 551--593.

\end{thebibliography}
\end{document}